\newcommand{\mcm}[3]{\newcommand{#1}[#2]{{\ensuremath{#3}}}} 
\mcm{\NN}{0}{\mathbb{N}}
\mcm{\Tcal}{0}{\mathcal{T}}
\mcm{\Scal}{0}{\mathcal{S}}
\mcm{\Pcal}{0}{\mathcal{P}}
\mcm{\Bcal}{0}{\mathcal{B}}
\mcm{\Fcal}{0}{\mathcal{F}}
\mcm{\Ucal}{0}{\mathcal{U}}
\mcm{\abs}{1}{\left\lvert #1 \right\rvert}
\mcm{\defiff}{0}{:\Leftrightarrow}
\mcm{\defeq}{0}{:=}
\mcm{\eqdef}{0}{=:}
\mcm{\sm}{0}{\setminus}
\mcm{\se}{0}{\subseteq}
\mcm{\ct}{0}{^\complement}
\mcm{\restricted}{0}{{\upharpoonright}}
\newcommand{\ignore}[1]{}
\DeclareMathOperator{\interior}{int}
\DeclareMathOperator{\corner}{cor}
\DeclareMathOperator{\Aut}{Aut}
\newtheorem{theorem}{Theorem}[section]
\newtheorem{thm_intro}{Theorem}
\theoremstyle{plain}
\numberwithin{subcase}{case}
\newtheorem{corollary}[theorem]{Corollary}
\newtheorem{lemma}[theorem]{Lemma}
\theoremstyle{definition}
\newtheorem{remark}[theorem]{Remark}
\newtheorem{construction}[theorem]{Construction}
\newtheorem{example}[theorem]{Example}
\numberwithin{equation}{section}
\title[]{Canonical tree-decompositions of a graph that display its $k$-blocks}
\author[]{Johannes Carmesin and J. Pascal Gollin}
\begin{document}

\begin{abstract}
    A $k$-\emph{block} in a graph $G$ is a maximal set of at least $k$ vertices no two of which can be separated in $G$ by removing less than $k$ vertices.
    It is \emph{separable} if there exists a tree-decomposition of adhesion less than $k$ of $G$ in which this $k$-block appears as a part.
    
    Carmesin, Diestel, Hamann, Hundertmark and Stein proved that every  finite graph has a canonical tree-decomposition of adhesion less than $k$ that distinguishes all its $k$-blocks and tangles of order $k$.
    We construct such tree-decompositions with the additional property that every separable $k$-block is equal to the unique part in which it is contained.
    This proves a conjecture of Diestel.
\end{abstract}

\maketitle

\section{Introduction}\label{section:intro}

\emph{Tangles} in a graph $G$ are orientations of the low order separations that consistently point towards some `highly connected piece' of $G$.
As a fundamental tool for their graph minors project, Robertson and Seymour~\cite{RS_GM10} proved that every finite graph has a tree-decomposition that distinguishes every two maximal tangles.

More recently, $k$-\emph{profiles} were introduced as a common generalisation of $k$-tangles and $k$-blocks~\cite{profiles}.
Here, a $k$-\emph{block} in a graph $G$ is a maximal set of at least $k$ vertices no two of which can be separated in $G$ by removing less than $k$ vertices.
Carmesin, Diestel, Hamann and Hundertmark showed that every graph has a canonical tree-decomposition of adhesion less than $k$ that distinguishes all its $k$-profiles~\cite{canon1}. 

In \cite{canon2}, these authors asked how one could improve the above tree-decompo{-}sitions further 
so that they also display the structure of the $k$-blocks:
it would be nice if we could compress any part containing a $k$-block so that it does not contain any `junk'.

In this paper, we prove that this is possible simultaneously for all $k$-blocks that can be isolated at all in a tree-decomposition, canonical or not.
More precisely, we call a $k$-block \emph{separable} if it appears as a part in some tree-decomposition of adhesion less than $k$ of $G$.
We prove the following, which was conjectured by Diestel~\cite{D14:SpiekeroogProblems} (see also~\cite{canon2}).

\begin{thm_intro}\label{Intro_Diestel_conj}
    Every finite graph $G$ has a canonical tree-decomposition~$\Tcal$ \linebreak of adhesion less than $k$ that distinguishes efficiently every two distinct \linebreak $k$-profiles, 
    and which has the further property that
    every separable $k$-block is equal to the unique part of~$\Tcal$ in which it is contained.
\end{thm_intro}

We also prove the following related result:

\begin{thm_intro}\label{main_thm_intro}
    Every finite graph $G$ has a canonical tree-decomposition~$\Tcal$ that distinguishes efficiently every two distinct maximal robust profiles, 
    and which has the further property that
    every separable block inducing a maximal robust profile is equal to the unique part of~$\Tcal$ in which it is contained.
\end{thm_intro}

See Section \ref{section:prelims} for a definition of robust and \cite{tree_structure} for an example showing that Theorem~\ref{main_thm_intro} fails if we leave out `robust'.
Theorem~\ref{main_thm_intro} without its description of the separable blocks is a result of Hundertmark and Lemanczyk~\cite{profiles}, which implies the aforementioned theorem of Robertson and Seymour.
In Section~\ref{section:proof}, we give an example showing that it is impossible to ensure that non-maximal robust separable blocks are also displayed by a tree-decomposition which distinguishes all the maximal robust profiles efficiently.

After recalling some preliminaries in Section~\ref{section:prelims}, we develop the necessary tools in Section~\ref{section:construction}.
Then we prove our main result in Section~\ref{section:proof}.

\section{Preliminaries}\label{section:prelims}

Unless otherwise mentioned, $G$ will always denote a finite, simple and undirected graph with vertex set $V(G)$ and edge set $E(G)$.
Any graph-theoretic term and notation not defined here are explained in \cite{diestel_graph_theory}.

A vertex is called \emph{central} in $G$ if the greatest distance to any other vertex is minimal.
It is well known that a finite tree $T$ has either a unique central vertex or precisely two central adjacent vertices $v$ and $w$. In the second case $vw$ is called a \emph{central edge}.
For a vertex or edge to be central is obviously a property invariant under automorphisms of $G$.

Let us recall some notations from \cite{canon1}. 

\subsection{Separations}
An ordered pair $(A,B)$ of subsets of $V(G)$ is a \emph{separation} of $G$ if ${A \cup B = V(G)}$ and if there is no edge $e = vw \in E(G)$ with $v \in A \sm B$ and $w \in B \sm A$.
The cardinality $\abs{A \cap B}$ of the \emph{separator} $A \cap B$ of a separation $(A,B)$ is the \emph{order} of $(A,B)$ and a separation of order $k$ is a $k$-\emph{separation}.

A separation $(A,B)$ is \emph{proper} if neither $A \subseteq B$ nor $B \subseteq A$.
Otherwise $(A,B)$ is \emph{improper}.
A separation $(A,B)$ is \emph{tight} if every vertex in $A \cap B$ has a neighbour in $A \sm B$ and a neighbour in $B \sm A$.

The set of separations of $G$ is partially ordered via
\[
    (A,B) \leq (C,D)\ \defiff\ A \subseteq C\ \wedge\ D \subseteq B.
\]

For no two proper separations $(A,B)$ and $(C,D)$, the separation $(A,B)$ is {$\leq$-comparable} with $(C,D)$ and $(D,C)$. In particular we obtain that $(A,B)$ and $(B,A)$ are not {$\leq$-comparable}.

A separation $(A,B)$ is \emph{nested} with a separation $(C,D)$ if $(A,B)$ is \linebreak {$\leq$-comparable} with either $(C,D)$ or $(D,C)$. Since
\[
    (A,B) \leq (C,D)\ \iff\ (D,C) \leq (B,A),
\]
being nested is symmetric and reflexive. Separations that are not nested are called \emph{crossing}.

A separation $(A,B)$ is \emph{nested} with a set $S$ of separations if $(A,B)$ is nested with every $(C,D) \in S$.
A set $S$ of separations is \emph{nested} with a set $S'$ of separations if every $(A,B) \in S$ is nested with $S'$ or equivalently every $(C,D) \in S'$ is nested with $S$.

A set $N$ of separations is \emph{nested} if its elements are pairwise nested.
A set $S$ of separations is \emph{symmetric} if for every $(A,B) \in S$ it also contains its \emph{inverse} separation $(B,A)$.
A symmetric set $S$ of separations is also called a \emph{separation system} or a \emph{system of separations}, and if all its separations are proper, $S$ is called a \emph{proper separation system}.
For a set $S$ of separations the separation system \emph{generated by $S$} is the separation system consisting of the separations in $S$ and their inverses.
A set $S$ of separations is \emph{canonical} if it is invariant under the automorphisms of $G$, i.e.\ for every $(A,B) \in S$ and for every $\varphi \in \Aut(G)$ we obtain $(\varphi[A],\varphi[B]) \in S$.

A separation $(A,B)$ \emph{separates} a vertex set $X \subseteq V(G)$ if $X$ meets both $A \sm B$ and $B \sm A$.
Given a set $S$ of separations a vertex set $X \subseteq V(G)$ is $S$-\emph{inseparable} if no separation $(A,B) \in S$ separates $X$.
A maximal $S$-inseparable vertex set is an $S$-\emph{block} of $G$. 

For $k \in \NN$ let $S_{< k}$ denote the set of separations of order less than $k$ of~$G$.
The $(< k)$-\emph{inseparable} sets are the $S_{< k}$-inseparable sets.
So the $k$-\emph{blocks} are exactly the $S_{< k}$-blocks of size at least $k$.

For two separations $(A,B)$ and $(C,D)$ not equal to $(V(G),V(G))$ consider a \emph{cross-diagram} as in Figure~\ref{fig:cross_diagram}.
Every pair ${(X,Y) \in \{A,B\} \times \{C,D\}}$ denotes a \emph{corner} of this cross-diagram, which we also denote by $\corner(X,Y)$.
Let ${\overline{X} \in \{A,B\} \sm \{X\}}$ and ${\overline{Y} \in \{C,D\} \sm \{Y\}}$.
In the diagram we consider the \emph{center} ${c \defeq  A \cap B \cap C \cap D}$ and for a corner $\corner(X,Y)$ as above the \emph{interior} ${\interior(X,Y) \defeq (X \cap Y) \sm (\overline{X} \cup \overline{Y})}$ and the \emph{links} ${\ell_X \defeq (X \cap Y \cap \overline{Y}) \sm c}$ and ${\ell_Y \defeq (Y \cap X \cap \overline{X}) \sm c}$.
The vertex set ${X \cap Y}$ is the disjoint union of ${\interior(X,Y)}$ with $\ell_X$, $\ell_Y$ and $c$ and thus can be associated with the corner $\corner(X,Y)$.

\begin{center}
    \begin{figure}[htpb]
        \begin{center}
            \includegraphics[height=4.5cm]{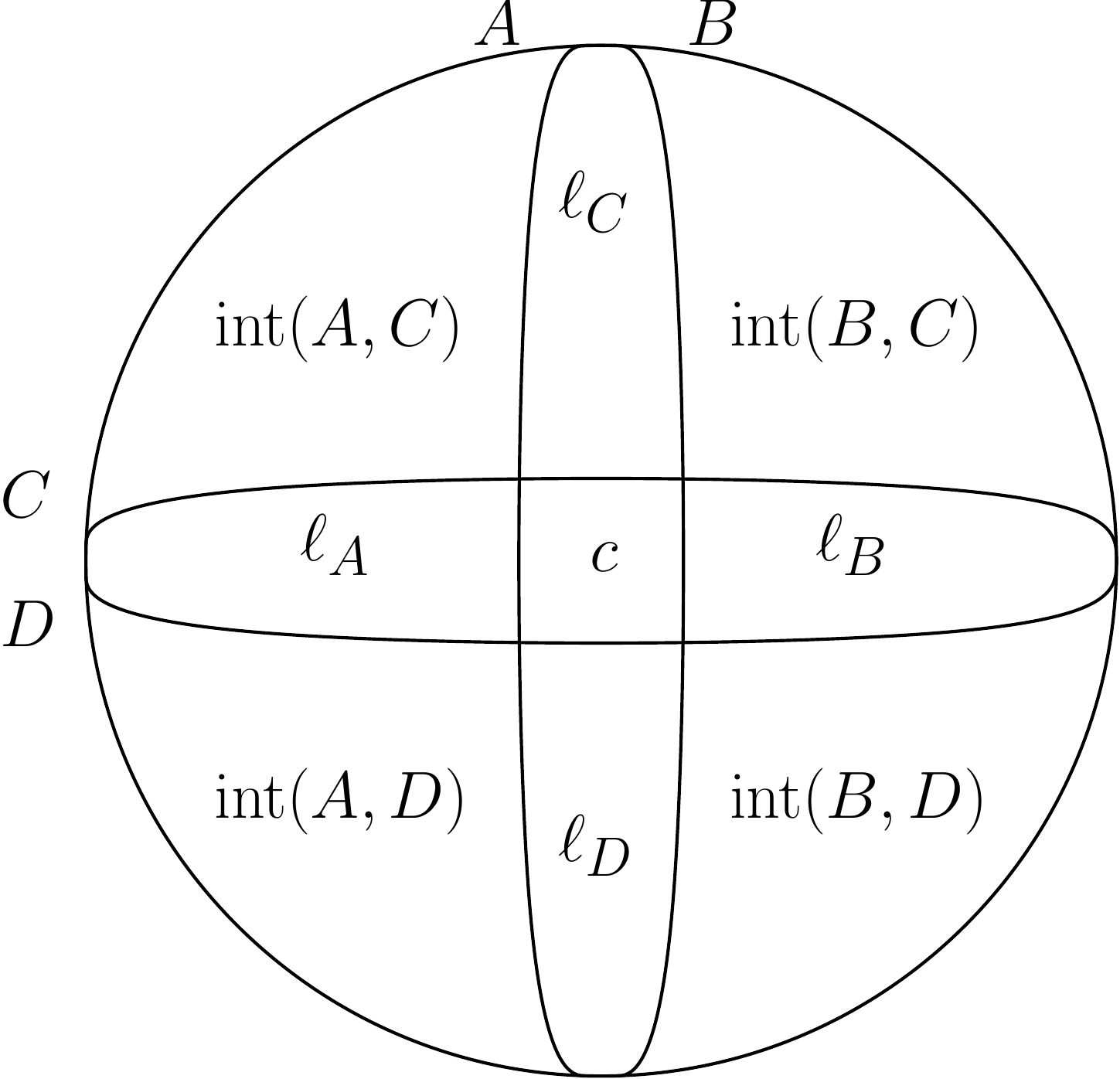} 
            \caption{cross-diagram for $(A,B)$ and $(C,D)$}
            \label{fig:cross_diagram}
        \end{center}
    \end{figure}
\end{center}

\begin{remark}\label{corner_lemma}
    Two separations $(A,B)$ and $(C,D)$ are nested, if and only if for one of their corners $\corner(X,Y)$ the interior $\interior(X,Y)$ and its links $\ell_X$ and $\ell_Y$ are empty.
    \qed
\end{remark}

For a corner $\corner(X,Y)$ there is a \emph{corner separation} ${(X \cap Y, \overline{X} \cup \overline{Y})}$, which is again a separation of $G$.

\begin{lemma}\label{fish_lemma}
    \emph{\cite[Lemma 2.2]{tree_structure}}
    For two crossing separations $(A,B)$ and $(C,D)$ any of its corner separation is nested with every separation that is nested with both $(A,B)$ and $(C,D)$.
\end{lemma}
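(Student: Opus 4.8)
The plan is to fix, without loss of generality, the corner separation $\sigma \defeq (A \cap C,\, B \cup D)$; the arguments for the other three corner separations are obtained by interchanging $A$ with $B$ and/or $C$ with $D$. Let $(E,F)$ be any separation nested with both $(A,B)$ and $(C,D)$. Since being nested with $(A,B)$ means $(E,F)$ is $\leq$-comparable with $(A,B)$ or with $(B,A)$, there are four possibilities for this comparability, and likewise four for $(C,D)$; I will show that in each of the resulting sixteen combinations $\sigma$ is $\leq$-comparable with $(E,F)$ or with $(B \cup D,\, A \cap C)$, which gives the claim since being nested is symmetric.

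First I would dispose of the ``easy'' combinations. If $A \subseteq E$ (that is, $(A,B) \leq (E,F)$), then $A \cap C \subseteq E$ and $F \subseteq B \subseteq B \cup D$, so $\sigma \leq (E,F)$; the same conclusion follows if $C \subseteq E$ (using $F \subseteq D$ in place of $F \subseteq B$). Dually, if $A \subseteq F$ or $C \subseteq F$, then $A \cap C \subseteq F$ and $E \subseteq B \cup D$, so $(E,F) \leq (B \cup D,\, A \cap C)$. In either case $(E,F)$ is nested with $\sigma$. Hence we may assume that the comparability of $(E,F)$ with $(A,B)$ is witnessed by $(E,F) \leq (A,B)$ or by $(B,A) \leq (E,F)$, and similarly for $(C,D)$.

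This leaves four subcases. If $(E,F) \leq (A,B)$ and $(E,F) \leq (C,D)$, then $E \subseteq A \cap C$ and $B \cup D \subseteq F$, i.e.\ $(E,F) \leq \sigma$; symmetrically, if $(B,A) \leq (E,F)$ and $(D,C) \leq (E,F)$, then $(B \cup D,\, A \cap C) \leq (E,F)$. In both subcases $(E,F)$ is nested with $\sigma$, as desired. It remains to rule out the two ``mixed'' subcases. Suppose $(E,F) \leq (A,B)$ and $(D,C) \leq (E,F)$. Then $D \subseteq E \subseteq A$ and $B \subseteq F \subseteq C$, so $D \subseteq A$ and $B \subseteq C$, which says exactly $(D,C) \leq (A,B)$ --- contradicting that $(A,B)$ and $(C,D)$ cross. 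The remaining mixed subcase, $(B,A) \leq (E,F)$ and $(E,F) \leq (C,D)$, yields $B \subseteq C$ and $D \subseteq A$ in the same way, again a contradiction. This exhausts all cases.

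I expect the main (and essentially only) obstacle to be bookkeeping: keeping the four-by-four case split under control and getting the directions of the partial order right. The reduction in the second paragraph and the symmetry reduction to a single corner are precisely what keeps this manageable; no deeper idea seems to be needed, and in particular Remark~\ref{corner_lemma} is not required, although it could be used to repackage the crossing hypothesis if a more diagrammatic argument were preferred.
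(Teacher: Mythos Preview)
Your argument is correct. The case analysis is complete: after disposing of the eight ``easy'' comparabilities (any one of which immediately yields $\sigma \leq (E,F)$ or $(E,F) \leq (B\cup D, A\cap C)$), the remaining $2\times 2$ subcases are handled cleanly, with the two mixed subcases producing $(D,C)\leq(A,B)$ or $(B,A)\leq(C,D)$ and hence contradicting that $(A,B)$ and $(C,D)$ cross. The symmetry reduction to a single corner via swapping $A\leftrightarrow B$ and/or $C\leftrightarrow D$ is legitimate, since both the crossing hypothesis and the nestedness hypotheses on $(E,F)$ are invariant under these swaps.

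As for comparison with the paper: note that the paper does not actually prove this lemma --- it is quoted verbatim from \cite[Lemma~2.2]{tree_structure} without proof. So there is no ``paper's own proof'' to compare against here; your direct partial-order argument is the standard one and is exactly what is needed.
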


In particular a corner separation is nested with $(A,B)$, $(C,D)$ and all corner separations.
A double counting argument yields:

\begin{remark}\label{submodularity}
    For any two separations $(A,B)$ and $(C,D)$, the orders of the separations $(A \cap C, B \cup D)$ and $(B \cap D, A \cup C)$ sum to $\lvert A \cap B \rvert + \lvert C \cap D \rvert$.
    \qed
\end{remark}

\subsection{Tree-decompositions}
Recall that a \emph{tree-decomposition} $\Tcal$ of $G$ is a pair $\big(T,(P_t)_{t \in V(T)}\big)$ of a tree $T$ and a family of vertex sets $P_t \subseteq V(G)$ for every node $t \in V(T)$, such that
\begin{enumerate}
    \item[(T1)] $V(G) = \bigcup_{t \in V(T)} P_t$;
    \item[(T2)] for every edge $e \in E(G)$ there is a node $t \in V(T)$ such that both end vertices of $e$ lie in $P_t$;
    \item[(T3)] whenever $t_2$ lies on the $t_1$ -- $t_3$ path in $T$ we obtain $P_{t_1} \cap P_{t_3} \subseteq P_{t_2}$.
\end{enumerate}

The sets $P_t$ are the \emph{parts} of $\Tcal$.
For an edge $tt' \in E(T)$ the intersection $P_{t} \cap P_{t'}$ is the corresponding \emph{adhesion set} and the maximum size of an adhesion set of $\Tcal$ is the \emph{adhesion} of $\Tcal$.
A node $t \in V(T)$ is a \emph{hub node} if the corresponding part $P_t$ is a subset of $P_{t'}$ for some neighbour $t'$ of $t$. If $t$ is a hub node, then $P_t$ is a \emph{hub}.
A tree-decomposition $\Tcal = \big(T, (P_t)_{t \in  V(T)}\big)$ of $G$ and a tree-decomposition $\Tcal' = \big(T', (P'_t)_{t \in  V(T')}\big)$ of $G'$ are \emph{isomorphic} if there is an isomorphism $\varphi: G \to G'$ and an isomorphism $\psi: T \to T'$ such that for every part $P_t$ of $\Tcal$ we obtain $\varphi[P_t] = P'_{\psi(t)}$.
We say $\varphi$ \emph{induces} an isomorphism between $\Tcal$ and $\Tcal'$. 
A tree-decomposition $\Tcal$ is \emph{canonical} if it is invariant under the automorphisms of $G$, i.e.\ every automorphism of $G$ induces an automorphism of $\Tcal$.

Let $\big(T,(P_t)_{t \in V(T)}\big)$ be a tree-decomposition of $G$.
For $t \in V(T)$ the \emph{torso} $H_t$ is the graph obtained from $G[P_t]$ by adding all edges joining two vertices in a common adhesion set $P_{t} \cap P_{u}$ for any $tu \in E(T)$.
A separation $(A,B)$ of~$G[P_t]$ is a separation of $H_t$ if and only if it does not separate any adhesion set $P_t \cap P_{t'}$ for $tt' \in E(T)$.
A separation $(A,B)$ of $G$ with $A \cap B \subseteq P_t$ for some node $t \in V(T)$ that does not separate any adhesion set $P_t \cap P_{t'}$ for $tt' \in E(T)$ \emph{induces} the separation $(A \cap P_t, B \cap P_t)$ of $H_t$.

Every oriented edge $\vec{e} = t_1t_2$ of $T$ divides $T-e$ in two components $T_1$ and $T_2$ with $t_1 \in V(T_1)$ and $t_2 \in V(T_2)$.
By \cite[Lemma 12.3.1]{diestel_graph_theory} $\vec{e}$ \emph{induces} the separation $\big( \bigcup_{t \in V(T_1)} P_t, \bigcup_{t \in V(T_2)} P_t \big)$ of $G$ such that the separator coincides with the adhesion set $P_{t_1} \cap P_{t_2}$.
We say a separation is \emph{induced} by $\Tcal$ if it is induced by an oriented edge of $T$. 

The set of separations induced by a tree-decomposition $\Tcal$ (of adhesion less than $k$) is a nested system $N(\Tcal)$ of separations (of order less than $k$).
We say $N(\Tcal)$ is \emph{induced} by $\Tcal$. Clearly if $\Tcal$ is canonical, then so is $N(\Tcal)$.

Conversely, as proven in \cite{tree_structure}, every nested separation system $N$ \emph{induces} a tree-decomposition $\Tcal(N)$:

\begin{theorem}\label{tree_from_N}
    \emph{\cite[Theorem 4.8]{tree_structure}}
    Let $N$ be a canonical nested separation system of $G$. Then there is a canonical\,\footnote{In the original paper this theorem is stated without the canonicity since it holds in a greater generality. But it is clear from the proof that if $N$ is canonical, then so is $\Tcal(N)$.} tree-decomposition $\Tcal(N)$ of $G$ such that
    \begin{enumerate}
        \item[(i)] every $N$-block of $G$ is a part of $\Tcal(N)$;
        \item[(ii)] every part of $\Tcal(N)$ is either an $N$-block of $G$ or a hub;
        \item[(iii)] the separations of $G$ induced by $\Tcal(N)$ are precisely those in $N$;
        \item[(iv)] every separation in $N$ is induced by a unique oriented edge of $\Tcal(N)$.
    \end{enumerate}
\end{theorem}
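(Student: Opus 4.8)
The plan is to carry out the classical construction of a tree-decomposition from a nested separation system --- in essence the one of Robertson and Seymour, cf.\ also \cite{canon1} --- and then to verify the four listed properties from it. Call a subset $O \se N$ a \emph{consistent orientation} of $N$ if it contains exactly one separation of every inverse pair $\{(A,B),(B,A)\} \se N$ and contains no two separations $(A,B),(C,D) \in O$ from distinct inverse pairs with $(B,A) \leq (C,D)$; intuitively, no two separations of $O$ ``point away from each other''. Since $G$, and hence $N$, is finite, the set $\Ucal$ of consistent orientations of $N$ is finite; turn it into a graph $T$ by joining two orientations whenever they differ on exactly one inverse pair. The core combinatorial step --- and essentially the only place where nestedness of $N$ is used --- is to show that $T$ is a finite tree: connectedness follows by repeatedly reorienting a $\leq$-maximal separation on which two given orientations disagree (using the consistency of both to check the reorientation stays consistent), and acyclicity by an order argument. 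Assign to each node $O$ of $T$ the part $P_O \defeq \bigcap_{(A,B) \in O} B$, and put $\Tcal(N) \defeq \big(T,(P_O)_{O \in V(T)}\big)$.

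One now verifies that $\Tcal(N)$ is a tree-decomposition with the prescribed induced separations. For (T1) and (T2): every one-vertex set $\{v\}$ is $N$-inseparable, and every edge-set $\{v,w\}$ of $G$ is $N$-inseparable (a separation of $G$ never has the two ends of an edge strictly on different sides); each therefore lies in some $N$-block, which by the argument for (i) below is a part of $\Tcal(N)$. For (T3) one uses that if $t_2$ lies on the $t_1$ -- $t_3$ path in $T$ then $O_{t_2} \se O_{t_1} \cup O_{t_3}$, so that $P_{t_2} = \bigcap_{(A,B) \in O_{t_2}} B \supseteq P_{t_1} \cap P_{t_3}$. There is a natural orientation-preserving bijection between the oriented edges of $T$ and the separations in $N$ under which, for each node $t$, the orientation $O_t$ consists of precisely those separations whose oriented edge points towards $t$; a direct computation then shows that the separation of $G$ induced by an oriented edge $\vec e$ of $T$ (in the sense of \cite[Lemma 12.3.1]{diestel_graph_theory}) is exactly the separation of $N$ assigned to $\vec e$. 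This yields (iii) and (iv) simultaneously, and in particular each adhesion set of $\Tcal(N)$ is the separator of a separation of $N$.

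For (i), let $X$ be an $N$-block. As $X$ is $N$-inseparable, it lies in one of the two sides of each inverse pair of $N$, so orienting every pair towards a side containing $X$ gives a consistent orientation $O_X$ with $X \se P_{O_X}$. But $P_{O_X}$ is itself $N$-inseparable, since for each $(C,D) \in N$ one of $(C,D),(D,C)$ lies in $O_X$ and hence $P_{O_X}$ is contained in $D$ or in $C$; maximality of $X$ then forces $X = P_{O_X}$, so $X$ is a part of $\Tcal(N)$. For (ii), one shows that a part $P_O$ which is not an $N$-block is contained in a neighbouring part, hence a hub: the point is that such a $P_O$ is contained in an adhesion set at $O$, and reorienting the corresponding inverse pair then produces a neighbour whose part contains $P_O$. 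Finally $\Tcal(N)$ is canonical, since every automorphism of $G$ permutes $N$, hence permutes $\Ucal$ preserving adjacency and commuting with $O \mapsto P_O$, and so induces an automorphism of $\Tcal(N)$.

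I expect the main obstacle to be (ii): correctly identifying the ``degenerate'' parts --- those sitting inside an incident separator, which are subsets of a neighbouring part rather than $N$-blocks --- and ruling out any subtler kind of part. A secondary delicate point is the purely combinatorial heart of the first step, namely that the consistent orientations really do assemble into a tree, and that distinct oriented edges of it induce distinct separations, so that $\Tcal(N)$ induces exactly $N$ with no identifications.
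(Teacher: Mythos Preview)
The paper does not give its own proof of this theorem; it is quoted verbatim from \cite[Theorem~4.8]{tree_structure} and used as a black box. Your sketch is essentially the construction carried out in that reference (in the equivalent formulation via consistent orientations of $N$, as in later expositions such as \cite{diestel_graph_theory}), and it is correct in outline; the two places you flag as delicate --- that the graph on consistent orientations is a tree, and that a part which fails to be an $N$-block must sit inside a neighbouring part and hence be a hub --- are precisely where the work in \cite{tree_structure} lies.
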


\subsection{Profiles}
Let $S$ be a separation system.
A subset $O \subseteq S$ is an \emph{orientation} of~$S$ if for every $(A,B) \in S$ exactly one of $(A,B)$ and $(B,A)$ is an element of~$O$.
An orientation $O$ of $S$ is \emph{consistent} if for every $(A,B)$, $(C,D) \in S$ with $(A,B) \in O$ and $(C,D) \leq (A,B)$ we obtain $(C,D) \in O$ as well. 
A consistent orientation $P$ of $S_{< k}$ is called a $k$-\emph{profile} 
if it satisfies
\begin{enumerate}
    \item[(P)] for all $(A,B)$, $(C,D) \in P$ we have $(B \cap D, A \cup C) \notin P$.
\end{enumerate}
In particular if the order $\abs{(A \cup C) \cap (B \cap D)}$ of this corner separation is less than $k$, we have $(A \cup C, B \cap D) \in P$.
Sometimes we omit the $k$ and call $P$ a \emph{profile}.

It is easy to check that every $k$-block $b$ \emph{induces} a $k$-profile via
\[
    P_k(b) \defeq \big\{ (A,B) \in S_{< k}\ \big|\ b \subseteq B \big\}.
\]
Also \emph{tangles} of order $k$ (or $k$-\emph{tangles}), as introduced by Robertson and Seymour \cite{RS_GM10}, are $k$-profiles.
For more background on profiles, see \cite{profiles}. 

For $r \in \NN$, a $k$-profile $P$ is $r$-\emph{robust} if for any ${(A,B) \in P}$ and any ${(C,D) \in S_{< r+1}}$ one of $(A \cup C, B \cap D)$, $(A \cup D, B \cap C)$ either has order at least $k-1$, or is in $P$.
If $P$ is $r$-robust for all $r \in \NN$, then we call $P$ \emph{robust}.

A robust $k$-profile $P$ is \emph{maximal} if there does not exist a robust $\ell$-profile~$Q$ with $P \subsetneq Q$ and $\ell > k$.
Then $P$ is just called a \emph{maximal robust profile}.

\begin{remark} \label{robust}
    \begin{itemize}
        \item[(i)] Every $k$-profile is $\ell$-robust for all $\ell < k$;
        \item[(ii)] if a $k$-block $b$ contains a complete graph on $k$ vertices, then the induced $k$-profile $P_{k}(b)$ is robust. \qed 
    \end{itemize}
\end{remark}


The next lemma basically states that every $k$-profile induces a $k$-haven, as introduced by Seymour and Thomas \cite{havens}.

\begin{lemma}\label{profiles_are_havens}
    Let ${X \subseteq V(G)}$ with ${\abs{X} < k}$ and let $Q$ be a {$k$-profile}.
    Then there exists a component $C$ of ${G-X}$ such that ${(V(G) \sm C, C \cup X) \in Q}$.
    
    Furthermore, ${(V(G) \sm C, C \cup N(C)) \in Q}$ as well.
\end{lemma}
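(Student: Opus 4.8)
The plan is to prove both assertions by contradiction, using only that $Q$ is a consistent orientation of $S_{<k}$ satisfying property~(P), together with the elementary fact that an orientation cannot contain a separation and its inverse. For the first assertion, I would begin by noting that for every component $C$ of $G-X$ the pair $(V(G)\sm C,\,C\cup X)$ is a separation of $G$: its sides cover $V(G)$, and since $C$ is a whole component of $G-X$ every edge meeting $C$ has both ends in $C\cup X$. Its separator is $X$, so its order is $\abs{X}<k$ and $Q$ orients it. Now assume for contradiction that $(C\cup X,\,V(G)\sm C)\in Q$ for \emph{every} component $C$ of $G-X$. Enumerate the components as $C_1,\dots,C_m$ and put $D_j\defeq C_1\cup\dots\cup C_j$; each $(D_j\cup X,\,V(G)\sm D_j)$ is again a separation with separator $X$ for the same reason. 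I would then prove by induction on $j$ that $(D_j\cup X,\,V(G)\sm D_j)\in Q$. The case $j=1$ is the assumption, and for the step, applying~(P) to $(D_j\cup X,\,V(G)\sm D_j)$ and $(C_{j+1}\cup X,\,V(G)\sm C_{j+1})$ shows that the corner separation $\bigl((V(G)\sm D_j)\cap(V(G)\sm C_{j+1}),\,(D_j\cup X)\cup(C_{j+1}\cup X)\bigr)=(V(G)\sm D_{j+1},\,D_{j+1}\cup X)$ is not in $Q$; since its order is $\abs{X}<k$, its inverse $(D_{j+1}\cup X,\,V(G)\sm D_{j+1})$ lies in $Q$. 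Taking $j=m$ and using $D_m=V(G)\sm X$ gives $(V(G),X)\in Q$; but $(X,V(G))\leq(V(G),X)$, so consistency forces $(X,V(G))\in Q$ too, a contradiction. Hence $(V(G)\sm C,\,C\cup X)\in Q$ for some component $C$.

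For the ``furthermore'' part I would keep this $C$ and use that $N(C)\se X$. Then $\sigma\defeq(V(G)\sm C,\,C\cup N(C))$ is a separation whose separator is $N(C)$, hence of order $<k$, so $Q$ orients it. If $\sigma\notin Q$, then $(C\cup N(C),\,V(G)\sm C)\in Q$, and applying~(P) to this separation and $(V(G)\sm C,\,C\cup X)\in Q$ shows that the corner separation $\bigl((C\cup X)\cap(V(G)\sm C),\,(V(G)\sm C)\cup(C\cup N(C))\bigr)=(X,V(G))$ is not in $Q$; as its order is $\abs{X}<k$, its inverse $(V(G),X)$ lies in $Q$, and consistency once more yields the contradiction $(X,V(G))\in Q$. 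Therefore $\sigma\in Q$.

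The genuinely routine steps are the verifications that the various pairs written down are separations and the evaluations of their separators, which are always subsets of $X$ and hence of size $<k$; the degenerate possibility $X=V(G)$ does not arise, since the existence of a $k$-profile already forces $\abs{V(G)}\geq k>\abs{X}$ (otherwise $(V(G),V(G))$ would lie in $S_{<k}$, be its own inverse, and contradict~(P)). The only point needing care, and the main obstacle, is the bookkeeping when invoking~(P): one must make sure to read off the corner separation with the \emph{small} separator --- equal to $X$ in both the inductive step and the ``furthermore'' part --- rather than the complementary corner, whose order could be at least $k-1$, and then invoke the ``in particular'' clause after~(P) to conclude that its inverse belongs to $Q$. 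Once this is set up correctly, the rest is formal.
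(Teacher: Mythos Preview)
Your proof is correct and follows essentially the same approach as the paper's: both argue by contradiction, inductively combining the assumed orientations $(C_i\cup X,\,V(G)\sm C_i)\in Q$ via~(P) to reach $(V(G),X)\in Q$ and then invoking consistency against $(X,V(G))\leq(V(G),X)$; the ``furthermore'' part likewise matches. The only difference is cosmetic---you spell out the mechanics of~(P) and the verification that the intermediate pairs are separations more explicitly, whereas the paper invokes the ``in particular'' clause after~(P) directly.
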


\begin{proof}
    Let $C_1, \ldots, C_n$ denote the components of $G - X$ and for ${i \in \{1, \ldots, n\}}$ let ${(A_i,B_i) \defeq (V(G) \sm C_i, C_i \cup X)}$. 
    To reach a contradiction suppose that $(B_i,A_i) \in Q$ for all ${i \in \{1, \ldots, n\}}$.
    Then (P) yields inductively for all $m \leq n$ that ${\big(\bigcup_{i \leq m} B_i, \bigcap_{i \leq m} A_i \big) \in Q}$, since their separators all equal $X$.
    Hence for ${m = n}$, we obtain ${(V(G),X) \in Q}$, contradicting the consistency of $Q$ with ${(X,V(G)) \leq (V(G),X)}$. Thus there is a component $C$ of $G-X$ such that ${(A,B) \defeq (V(G) \sm C, C \cup X) \in Q}$.
    
    Now suppose $(C \cup N(C), V(G) \sm C) \in Q$. Then (P) with $(A,B)$ yields that
        ${\big((V(G)\sm C) \cup C \cup N(C), (C \cup X) \cap (V(G) \sm C)\big) = (V(G),X) \in Q},$
    contradicting the consistency of $Q$ again.
\end{proof}

A $k$-profile $Q$ \emph{inhabits} a part $P_t$ of a tree-decomposition $\big(T,(P_t)_{t \in V(T)}\big)$ if for every $(A,B) \in Q$ we obtain that $(B \sm A)  \cap P_t$ is not empty. 
Note that if for a node $t$ every separation induced by an oriented edge $ut$ of $T$ has order less than $k$, then $Q$ inhabits $P_t$ if and only if all those separations are in $Q$.

\begin{corollary}
    \label{profiles_inhabit_large_parts}
    Let $\big( T, (P_t)_{t \in V(T)} \big)$ be a tree-decomposition and let $Q$ be a $k$-profile. If $Q$ inhabits a part $P_t$, then $\abs{P_t} \geq k$.
\end{corollary}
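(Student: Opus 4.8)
The plan is to argue by contradiction: assume $\abs{P_t} < k$ and show that $Q$ cannot inhabit $P_t$. The key input is Lemma~\ref{profiles_are_havens}, which I would apply with $X \defeq P_t$ -- this is legitimate precisely because $\abs{P_t} < k$. It produces a component $C$ of $G - P_t$ such that the separation $(A,B) \defeq \big(V(G) \sm V(C),\ V(C) \cup P_t\big)$ lies in $Q$. Note that $A \cap B = P_t$ (since $V(C)$ is disjoint from $P_t$), so this separation has order $\abs{P_t} < k$; hence it really is an element of $S_{<k}$ and it makes sense to speak of it being in $Q$.

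Next I would identify the ``large side'' of this separation. Since $C$ is a component of $G - P_t$ we have $V(C) \cap P_t = \emptyset$, and therefore
\[
    B \sm A = \big(V(C) \cup P_t\big) \cap V(C) = V(C),
\]
so that $(B \sm A) \cap P_t = V(C) \cap P_t = \emptyset$. By the definition of ``$Q$ inhabits $P_t$'', which demands $(B' \sm A') \cap P_t \neq \emptyset$ for \emph{every} $(A',B') \in Q$, the separation $(A,B)$ we just produced witnesses that $Q$ does \emph{not} inhabit $P_t$ -- a contradiction. Hence $\abs{P_t} \geq k$.

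The one genuinely delicate point -- and the step I would be most careful about -- is the degenerate possibility that $G - P_t$ has no components at all, i.e.\ $P_t = V(G)$, in which case Lemma~\ref{profiles_are_havens} has nothing to hand us. But this cannot occur under our hypotheses: if $P_t = V(G)$ and $\abs{P_t} < k$, then $(V(G),V(G))$ is a separation of order $< k$, i.e.\ an element of $S_{<k}$ that is equal to its own inverse, and then no subset of $S_{<k}$ can be an orientation of $S_{<k}$, contradicting the existence of the $k$-profile $Q$. So we may assume $P_t \subsetneq V(G)$ and the argument above applies. (Equivalently, one first records that the mere existence of a $k$-profile forces $\abs{V(G)} \geq k$, so $\abs{P_t} < k$ already rules out $P_t = V(G)$.)
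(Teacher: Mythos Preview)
Your argument is correct and follows exactly the same route as the paper's proof: assume $\lvert P_t\rvert<k$, apply Lemma~\ref{profiles_are_havens} with $X=P_t$ to obtain a component $C$ with $(V(G)\sm C,\,C\cup P_t)\in Q$, and observe that $(B\sm A)\cap P_t=C\cap P_t=\emptyset$ witnesses that $Q$ does not inhabit $P_t$. Your explicit treatment of the degenerate case $P_t=V(G)$ is a small point of extra care that the paper leaves implicit.
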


\begin{proof}
    Our aim is to show that if $\abs{P_t} < k$, then any $k$-profile $Q$ does not inhabit $P_t$.
    By Lemma~\ref{profiles_are_havens} there is a component $C$ of $G - P_t$ such that $(V(G) \sm C, C \cup P_t) \in Q$.
    Since $(C \cup P_t) \sm (V(G) \sm C) = C$ and since $C \cap P_t$ is empty, we obtain that $Q$ does not inhabit $P_t$.
\end{proof}

A set $\Pcal$ of profiles is \emph{canonical} if for every $P \in \Pcal$ and every automorphism~$\varphi$ of $G$ the profile ${\big\{ \big(\varphi[A],\varphi[B]\big)\ \big|\ (A,B) \in P \big\}}$ is also in $\Pcal$.

Two profiles $P$ and $Q$ are \emph{distinguishable} if there is a separation $(A,B)$ with $(A,B) \in P$ and $(B,A) \in Q$.
Such a separation \emph{distinguishes} $P$ and~$Q$.
It is said to distinguish $P$ and $Q$ \emph{efficiently} if its order $\abs{A \cap B}$ is minimal among all separations distinguishing $P$ and $Q$.
A set $\Pcal$ of profiles is \emph{distinguishable} if every two distinct ${P, Q \in \Pcal}$ are distinguishable.
A tree-decomposition $\Tcal$ \emph{distinguishes} two profiles $P$ and $Q$ (efficiently) if some ${(A,B)}$ induced by $\Tcal$ distinguishes them (efficiently).

For our main result of this paper, we will build on the following theorem.

\begin{theorem}\label{canon_td_profiles} 
    \emph{\cite[Theorem~2.6]{profiles}}\footnote{Since \cite{profiles} is unpublished, see also \cite[Theorem 6.3]{tree_structure} for a version just concerning robust blocks or \cite[Theorem 9.2]{end_structure} for a version also dealing with infinite graphs.}
    Every graph $G$ has a canonical tree-decomposition of adhesion less than $k$ that distinguishes every two distinguishable $(k-1)$-robust $\ell$-profiles of $G$ for some values $\ell \leq k$ efficiently.
    
    Moreover, every separation induced by the tree-decomposition distinguishes some of those profiles efficiently.
\end{theorem}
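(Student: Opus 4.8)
The plan is to build a canonical nested separation system $N$ of $G$ consisting of separations of order less than $k$, chosen so that it contains enough efficient distinguishers of the relevant profiles, and then to invoke Theorem~\ref{tree_from_N}. Call a pair $\{P,Q\}$ of distinct $(k-1)$-robust profiles, each of order at most $k$, \emph{relevant} if $P$ and $Q$ are distinguishable, and write $\kappa(P,Q)$ for the minimum order of a separation distinguishing them; note that any distinguisher of two $\ell$-profiles has order $<\ell\le k$, so $\kappa(P,Q)\le k-1$. I would construct $N$ in stages indexed by $m=0,1,2,\dots$: assuming a canonical nested set $N_{<m}$ of separations of order $<m$ has been built that between them efficiently distinguishes every relevant pair with $\kappa<m$, I would adjoin at stage $m$ some separations of order exactly $m$ that handle the relevant pairs with $\kappa=m$, keeping the union canonical and nested.

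The heart of stage $m$ is an uncrossing argument, and this is where the hypotheses of the theorem are used. Let $D_m$ be the set of all separations of order $m$ that efficiently distinguish some relevant pair; this set is canonical since the whole set-up is automorphism-invariant. I want to replace the crossing members of $D_m$ by corner separations so as to obtain a canonical set $N_m$ of separations of order at most $m$ that is nested, is nested with $N_{<m}$, and still efficiently distinguishes every relevant pair with $\kappa=m$. The basic move: suppose $(C,D)\in D_m$ efficiently distinguishes a relevant pair $\{R,S\}$ with $(C,D)\in R$, $(D,C)\in S$, and let $(A,B)$ be a separation of order at most $m$ crossing $(C,D)$. Orienting $(A,B)$ with respect to $R$, the profile axiom~(P) applied inside $R$ pins down the $R$-side of one of the corner separations of $(A,B)$ and $(C,D)$, and the $(k-1)$-robustness of $R$ resolves the remaining corner in the cases where (P) is silent; combined with Remark~\ref{submodularity} to control the order, this yields a corner separation of $(A,B)$ and $(C,D)$ lying in $R$ and of order at most $m$. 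Running the symmetric argument on the $S$-side, one obtains a corner separation that still distinguishes $R$ from $S$, hence --- being of order at most $m=\kappa(R,S)$ --- still does so efficiently, and which by Lemma~\ref{fish_lemma} is nested with every separation already nested with both $(A,B)$ and $(C,D)$, in particular with $N_{<m}$. Iterating this move (uncrossing the members of $D_m$ pairwise and against $N_{<m}$) and using a suitable potential function to guarantee termination produces $N_m$; canonicity is maintained because every replacement is performed $\Aut(G)$-equivariantly, for instance by always adjoining \emph{all} corner separations that qualify or by applying a fixed canonical tie-break.

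Setting $N\defeq\bigcup_m N_m$, we obtain a canonical nested separation system of $G$ of order less than $k$ that efficiently distinguishes every relevant pair of profiles. Theorem~\ref{tree_from_N} then provides a canonical tree-decomposition $\Tcal(N)$ of $G$ of adhesion less than $k$ with $N(\Tcal(N))=N$; by construction it distinguishes every two distinguishable $(k-1)$-robust $\ell$-profiles with $\ell\le k$ efficiently. The \emph{moreover} clause is then immediate: by part~(iii) of Theorem~\ref{tree_from_N} the separations induced by $\Tcal(N)$ are exactly the members of $N$, and each of those was adjoined precisely as (a corner-separation refinement of) an efficient distinguisher of a relevant pair, hence itself efficiently distinguishes some such pair.

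The step I expect to be the main obstacle is the uncrossing move in the second paragraph, in two intertwined respects. First, one must check, using submodularity together with~(P) and $(k-1)$-robustness, that passing from a crossing efficient distinguisher to a corner separation never increases the order and never loses the distinguishing property; robustness is exactly what is needed in the corners where (P) leaves both orientations possible, and getting the quantitative thresholds ($k-1$ versus $k$, and \emph{efficient} meaning order $=\kappa$) to line up requires care. Second, one must orchestrate all these replacements --- pairwise among $D_m$, and against the already-fixed $N_{<m}$ --- so that the outcome is at once nested, nested with $N_{<m}$, and $\Aut(G)$-invariant; this is the potential-function-plus-equivariance bookkeeping that tends to be the most delicate part of such arguments.
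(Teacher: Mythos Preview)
The paper does not prove this statement: Theorem~\ref{canon_td_profiles} is quoted from an external source (\cite[Theorem~2.6]{profiles}, with alternative references to \cite{tree_structure} and \cite{end_structure}) and is used as a black box in Construction~\ref{constr:refine_N}. So there is no ``paper's own proof'' to compare your proposal against.

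That said, your sketch is broadly the standard strategy used in those cited papers: build $N$ in layers by increasing order, and at each layer perform a canonical uncrossing of the efficient distinguishers using submodularity (Remark~\ref{submodularity}), the profile axiom~(P), and robustness, together with Lemma~\ref{fish_lemma} to preserve nestedness with the earlier layers. Two points, however, are not yet arguments and are precisely where the actual proofs do real work. First, canonicity: ``adjoin all corner separations that qualify'' does not in general yield a nested set, and ``apply a fixed canonical tie-break'' is not a construction --- the cited proofs instead select at each stage a canonical subfamily of \emph{extremal} efficient distinguishers (e.g.\ those crossing as few others as possible, or those that are $\le$-minimal in a suitable sense) and show that this subfamily is already nested; your iterative-replacement-plus-potential scheme would need to be made $\Aut(G)$-equivariant at every step, which is not automatic. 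Second, the ``moreover'' clause: it is not true that a corner separation obtained by uncrossing two efficient distinguishers is automatically an efficient distinguisher of \emph{some} relevant pair; you must show that the specific corner you keep has order exactly $\kappa(R,S)$ and hence distinguishes $\{R,S\}$ efficiently, and that no auxiliary separations of smaller order creep in during the process. You flagged both issues yourself, but as written they are gaps rather than details.
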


\section{Construction methods}\label{section:construction}

\subsection{Sticking tree-decompositions together}

Given a tree-decomposi{-}tion $\Tcal$ of~$G$ and for each torso $H_t$ a tree-decomposition $\Tcal^t$, 
our aim is to  construct a new tree-decomposition~$\overline{\Tcal}$ of~$G$ by gluing together the tree-decompositions~$\Tcal^t$ of the torsos along $\Tcal$ in a canonical way.

\begin{example}\label{example:canon_glue}
    First we shall give the construction of~$\overline{\Tcal}$ for a particular example:
    $G$ is obtained from three edge-disjoint triangles intersecting in a single vertex by identifying two other vertices of distinct triangles.
    The tree-decomposition $\Tcal$ of~$G$ and the tree-decompositions of the torsos are depicted in Figure~\ref{fig:ex1}\,(a).
    In order to stick the tree-decompositions of the torsos together in a canonical way, we first have to refine them, see Figure~\ref{fig:ex1}\,(b).
    \begin{figure}[htpb]
    \includegraphics[height=3.5cm]{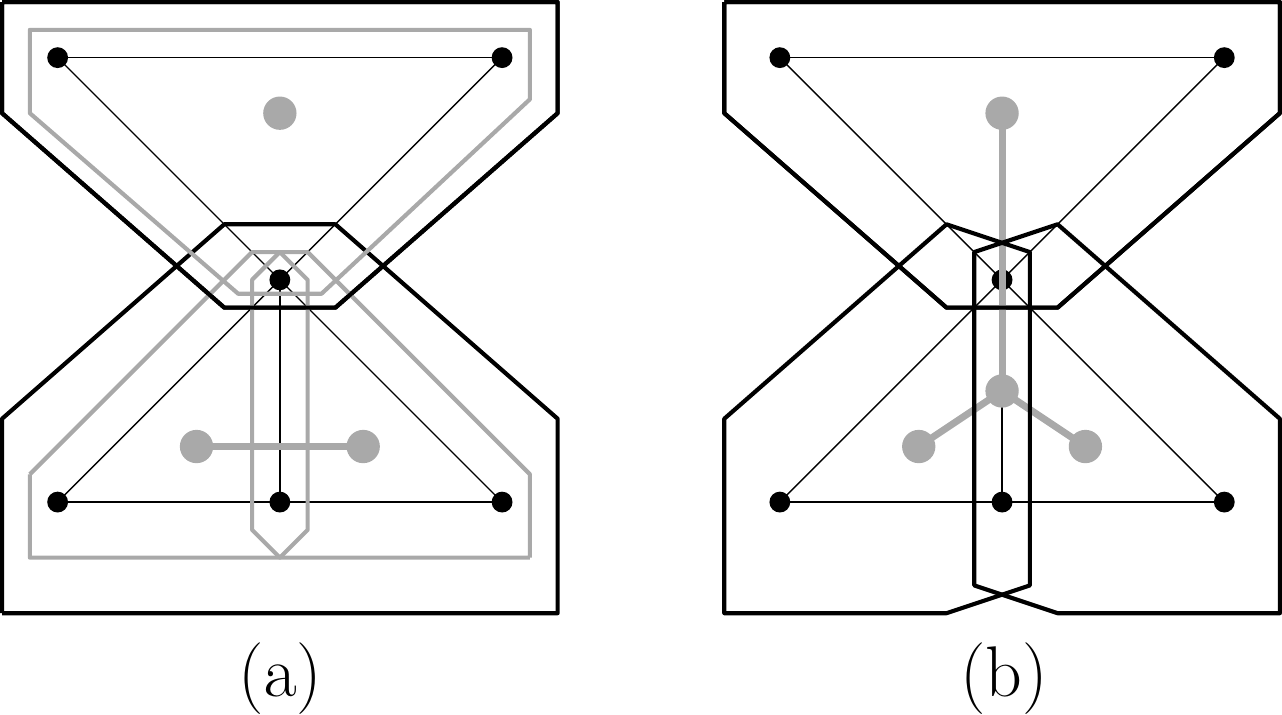} 
        \caption{(a) shows the tree-decomposition $\Tcal$ of~$G$, drawn in black, and the tree-decompositions of the torsos, drawn in grey.
            (b) shows the canonically glued tree-decomposition $\overline{\Tcal}$.}
        \label{fig:ex1}
    \end{figure}
\end{example}

Before we can construct $\overline \Tcal$, we need some preparation. 

\begin{construction}\label{constr:T_hat}
    Given a tree-decomposition $\Tcal = \big(T,(P_t)_{t \in V(T)}\big)$ of $G$, we construct a new tree-decomposition $\widetilde{\Tcal} = \big(\widetilde{T},({P}_t)_{t \in V(\widetilde{T})}\big)$ of~$G$ by contracting every edge $tu$ of $T$ where $P_t = P_u$.\footnote{Here we understand the nodes of $\widehat{T}$ to be nodes of $T$, where a node obtained through the contraction of an edge $tu$ to be identified with either $t$ or $u$.}
    In this tree-decomposition two adjacent nodes never have the same part.
    Let $F \subseteq E(\widetilde{T})$ be the set of edges $tu$ where neither ${P}_{t} \subseteq {P}_{u}$ nor ${P}_{u} \subseteq {P}_{t}$.
    By subdividing every edge $tu \in F$ and assigning to the subdivided node $x$ the part ${P}_{x} \defeq {P}_{t} \cap {P}_{u}$, we obtain a new tree-decomposition $\widehat{\Tcal} = \big(\widehat{T},({P}_t)_{t \in V(\widehat{T})}\big)$.
\end{construction}

\begin{remark}
    \label{gluing_aux_td}
    $\widehat{\Tcal}$ defined as in Construction~\ref{constr:T_hat} satisfies the following:
    \begin{enumerate}
        \item[(i)] every separation induced by $\widehat{\Tcal}$ is also induced by $\Tcal$;
        \item[(ii)] for every edge $tu \in E(T)$ that induces a separation not induced by $\widehat{\Tcal}$ we have $P_t = P_u$;
        \item[(iii)] for every edge $tu \in E(\widehat{T})$ precisely one of ${P}_{t}$ or ${P}_{u}$ is a proper subset of the other;
        \item[(iv)] if $\Tcal$ distinguishes two profiles $Q_1$ and $Q_2$ efficiently, then so does $\widehat{\Tcal}$;
        \item[(v)] if $\Tcal$ is canonical, then $\widehat{\Tcal}$ is canonical as well.
    \qed
    \end{enumerate}
\end{remark}

\begin{lemma}\label{can_choice}
    Let $K$ be a complete subgraph of $G$ and $\widehat{\Tcal}$ as in Construction~\ref{constr:T_hat}.
    Then there is a node $t$ of $\widehat{T}$ with $V(K) \subseteq {P}_t$ such that ${P}_t$ is fixed by every automorphism of $G$ fixing $K$.
\end{lemma}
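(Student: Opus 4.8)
The plan is to place $V(K)$ into a canonically chosen part of $\widehat{\Tcal}$, namely the part at the centre of the subtree of $\widehat T$ spanned by those nodes whose part contains $V(K)$.

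First I would check that this subtree is nonempty. For each $v \in V(K)$ the node set $U_v \defeq \{t \in V(\widehat T) : v \in P_t\}$ induces a subtree of $\widehat T$, by~(T3). Since $K$ is complete, (T2) provides for every edge $uv$ of $K$ a node lying in $U_u \cap U_v$, so the subtrees $U_v$ pairwise intersect; hence, by the Helly property for subtrees of a finite tree, $U \defeq \bigcap_{v \in V(K)} U_v$ induces a nonempty subtree $\widehat T[U]$ of $\widehat T$, and by construction $U = \{t \in V(\widehat T) : V(K) \se P_t\}$.

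Next I would pass to the centre of the finite tree $\widehat T[U]$, which (as recalled in Section~\ref{section:prelims}) consists of either a unique central node $t_0$ or a central edge $t_1 t_2$, and being central is invariant under automorphisms of $\widehat T[U]$. Now let $\varphi$ be any automorphism of $G$ fixing $K$. Since $\Tcal$ is canonical, so is $\widehat{\Tcal}$ by Remark~\ref{gluing_aux_td}(v), and hence $\varphi$ induces an automorphism $\psi$ of $\widehat T$ with $\varphi[P_t] = P_{\psi(t)}$ for all $t \in V(\widehat T)$. From $\varphi[V(K)] = V(K)$ one reads off that $t \in U \iff \psi(t) \in U$, so $\psi$ restricts to an automorphism of $\widehat T[U]$; it therefore fixes $t_0$ in the first case and maps the edge $t_1 t_2$ onto itself in the second.

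In the first case we are done with $t \defeq t_0$: indeed $V(K) \se P_{t_0}$, and $\varphi[P_{t_0}] = P_{\psi(t_0)} = P_{t_0}$ for every $\varphi$ fixing $K$. The only genuine obstacle is the central-edge case, where a priori $\psi$ could interchange $t_1$ and $t_2$; to exclude this I would invoke Remark~\ref{gluing_aux_td}(iii). Since $t_1 t_2 \in E(\widehat T)$, precisely one of $P_{t_1}$, $P_{t_2}$ is a proper subset of the other, say $P_{t_1} \subsetneq P_{t_2}$, whence $\abs{P_{t_1}} < \abs{P_{t_2}}$. If some $\psi$ arising as above swapped $t_1$ and $t_2$, it would give $\varphi[P_{t_1}] = P_{t_2}$, contradicting that $\varphi$, being a bijection of the finite set $V(G)$, preserves cardinalities. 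Hence every such $\psi$ fixes both $t_1$ and $t_2$, so $\varphi[P_{t_1}] = P_{t_1}$ and $\varphi[P_{t_2}] = P_{t_2}$ for every $\varphi$ fixing $K$, and $t \defeq t_1$ (equivalently $t_2$) has the required property.
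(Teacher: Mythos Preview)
Your argument is correct and follows essentially the same route as the paper: identify the subtree of $\widehat T$ consisting of all nodes whose part contains $V(K)$, take its centre, and use Remark~\ref{gluing_aux_td}(iii) to break the potential symmetry in the central-edge case. The paper is terser (it does not spell out the Helly argument or the cardinality reasoning), but the underlying idea is identical.

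One small remark: you explicitly invoke canonicity of $\Tcal$ (via Remark~\ref{gluing_aux_td}(v)) to get that $\varphi$ induces an automorphism of $\widehat T$. This assumption is not stated in the lemma, but the paper's own proof relies on it just as implicitly, and the lemma is only ever applied in contexts where it holds; so you are not adding anything the paper does not also need.
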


\begin{proof}
    As $K$ is complete, there is a node $u \in V(\widehat{T})$ with $V(K) \subseteq {P}_{u}$.
    
    Let $W$ be the subforest of nodes $w$ with $K \subseteq {P}_{w}$, which is connected as~$\widehat \Tcal$ is a tree-decomposition.
    Now $W$ either has a central vertex $t$ or a central edge~$tu$ such that ${P}_u$ is a proper subset of ${P}_t$ (cf Remark~\ref{gluing_aux_td}\,(iii)).
    In both cases ${P}_t$ is fixed by the automorphisms of $G$ that fix $K$.
\end{proof}

\begin{construction}\label{constr:T_bar}
    Let $\Tcal = \big(T,(P_t)_{t \in V(T)}\big)$ be a tree-decomposition of $G$.
    For each $t \in V(T)$ let ${\Tcal^t = \big(T^t,(P_{u}^{t})_{u \in V(T_t)}\big)}$ be a tree-decomposition of the torso $H_t$. 
    For each $\Tcal^t$ let $\widehat{\Tcal}^t$ be as in Construction~\ref{constr:T_hat}.
    For $e = t u \in E(T)$ let $A_e$ denote the adhesion set $P_{t} \cap P_{u}$.    
    Since $H_t[A_e]$ is complete, we can apply Lemma~\ref{can_choice}: there is a node $\gamma(t,u)$ of $\widehat{T}^t$ with $A_e \subseteq {P}_{\gamma(t,u)}^t$ 
such that ${P}_{\gamma(t,u)}^t$ is fixed by every automorphism of $H_t$ fixing $K$. 
    
    We obtain a tree $\overline{T}$ from the disjoint union of the trees $\widehat{T}^t$ for all $t \in V(T)$ by adding the edges $\gamma(t,u)\gamma(u,t)$ for each $tu\in E(T)$.
    Let $\overline{P}_u$ be ${P}_u^{t}$ for the unique $t \in V(T)$ with $u \in V(\widehat{T}^{t})$.
    Then $\overline{\Tcal} \defeq \big(\overline{T},(\overline{P}_t)_{t \in V(\overline{T})} \big)$ is a tree-decomposition of $G$.
\end{construction}

Two torsos $H_t$ and $H_u$ of $\Tcal$ are \emph{similar}, if there is an automorphism of $G$ that induces an isomorphism between $H_t$ and $H_u$.
The family $\big(\Tcal^t\big)_{t \in V(T)}$ is \emph{canonical} if all the $\Tcal^t$ are canonical and for any two similar torsos $H_t$ and $H_u$ of $\Tcal$ every automorphism of $G$ that witnesses the similarity of $H_t$ and $H_u$ induces an isomorphism between $\Tcal^t$ and $\Tcal^u$.

\begin{lemma}\label{gluing_lemma}
    The tree-decomposition $\overline{\Tcal}$ as in Construction~\ref{constr:T_bar} satisfies the following:
    \begin{enumerate}
        \item[(i)] for $t \in V(T)$ every node $u \in V(T^t)$ is also a node of $\overline{T}$ and $\overline{P}_{u} = P_{u}^t$;
        \item[(ii)] every node $u \in V(\overline{T})$ that is not a node of any $T^t$ is a hub node;
        \item[(iii)] every separation $(A,B)$ induced by $\overline{\Tcal}$ is either induced by $\Tcal$ or there is a node $t \in V(T)$ such that $(A \cap P_t, B \cap P_t)$ is induced by~$\Tcal^t$;
        \item[(iv)] every separation induced by $\Tcal$ is also induced by~$\overline{\Tcal}$;
        \item[(v)] for every separation $(C,D)$ induced by $\widehat{\Tcal}^t$ there is a separation $(A,B)$ induced by $\overline{\Tcal}$ such that $A \cap B \subseteq P_t$ and $(A \cap P_t, B \cap P_t) = (C,D)$;
        \item[(vi)] if $\Tcal$ and the family of the $\Tcal^t$ are canonical, then $\overline{\Tcal}$ is canonical. 
    \end{enumerate}
\end{lemma}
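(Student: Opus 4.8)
The plan is to argue edge by edge. The edges of $\overline T$ come in two kinds: the \emph{internal} ones, lying inside a single $\widehat T^t$, and the \emph{bridge} edges $\gamma(t,u)\gamma(u,t)$ added for the edges $tu\in E(T)$; almost all of (i)--(v) follows once we identify, for each kind, the separation of $G$ it induces. Before that I would check (or recall from the construction) that $\overline\Tcal$ is a tree-decomposition by the usual gluing argument: (T1) and (T2) are inherited from the tree-decompositions $\widehat\Tcal^t$ of the torsos, whose parts cover $V(H_t)=P_t$, together with (T1), (T2) for $\Tcal$ and the inclusion $E(G[P_t])\subseteq E(H_t)$; and (T3) holds because for $tu\in E(T)$ every vertex of the adhesion set $A_e=P_t\cap P_u$ lies in both $\overline P_{\gamma(t,u)}$ and $\overline P_{\gamma(u,t)}$, so the relevant subtrees inside consecutive $\widehat T^s$ are joined across the bridge edges. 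Parts (i) and (ii) then just read off Construction~\ref{constr:T_bar}: each $\widehat T^t$ is a subtree of $\overline T$ with unchanged parts, and the only nodes of $\overline T$ not coming from any $T^t$ are the subdivision nodes of Construction~\ref{constr:T_hat}; such a node $x$ lies on a subdivided edge $yz$ with $\overline P_x=\overline P_y\cap\overline P_z\subseteq\overline P_y$ and $y$ a neighbour of $x$ in $\overline T$, so $x$ is a hub node.

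For (iii)--(v) I would compute the induced separations. A bridge edge $\gamma(t,u)\gamma(u,t)$ separates $\overline T$ exactly as $tu$ separates $T$, and since $\widehat\Tcal^s$ covers $P_s$ the union of the parts on a side is $\bigcup_s P_s$ over the corresponding side of $tu$ in $T$; so the bridge edge induces precisely the separation that $\Tcal$ induces at $tu$. This gives (iv) and the first alternative of (iii) at once. An internal edge $e'=xy$ of $\widehat T^t$ induces a separation $(C,D)$ of $H_t$; deleting $e'$ from $\overline T$ leaves the $x$-side consisting of the $x$-side of $\widehat T^t-e'$ together with all branches of $T$ hanging off those neighbours $u$ of $t$ whose $\gamma(t,u)$ lies on the $x$-side. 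If $(A,B)$ is the separation of $G$ it induces, then $A\cap P_t=C$: the inclusion $\supseteq$ is clear, and for $\subseteq$ a vertex of $P_t$ in such a hanging branch at $u$ lies in $P_t\cap P_s\subseteq P_t\cap P_u=A_e\subseteq\overline P_{\gamma(t,u)}\subseteq C$ by (T3) for $\Tcal$; symmetrically $B\cap P_t=D$, and $A\cap B=\overline P_x\cap\overline P_y=C\cap D\subseteq P_t$. Passing from $\widehat\Tcal^t$ to $\Tcal^t$ via Remark~\ref{gluing_aux_td}\,(i) gives the second alternative of (iii), and running the same computation over all edges of $\widehat T^t$ gives (v).

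The substantial part is (vi). Given $\varphi\in\Aut(G)$, canonicity of $\Tcal$ yields $\psi\in\Aut(T)$ with $\varphi[P_t]=P_{\psi(t)}$, and then $\varphi$ maps the torso $H_t$ isomorphically onto $H_{\psi(t)}$ --- it preserves $G$-edges and sends each adhesion set $P_t\cap P_u$ to the adhesion set $P_{\psi(t)}\cap P_{\psi(u)}$, hence the added torso edges as well --- so $H_t$ and $H_{\psi(t)}$ are similar via $\varphi$. Canonicity of the family $(\Tcal^t)_t$ then gives an isomorphism between $\Tcal^t$ and $\Tcal^{\psi(t)}$ induced by $\varphi$; since Construction~\ref{constr:T_hat} (contracting equal-part edges, subdividing the incomparable ones) is invariant under graph isomorphisms, $\varphi$ also induces an isomorphism $\widehat\rho_t\colon\widehat T^t\to\widehat T^{\psi(t)}$ with $\varphi[\overline P_x]=\overline P_{\widehat\rho_t(x)}$. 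The one point where the exact statement of Lemma~\ref{can_choice} enters is that this choice is natural: since $\varphi$ carries the complete subgraph $H_t[A_e]$ onto $H_{\psi(t)}[A_{\psi(e)}]$, the node of $\widehat T^t$ that Lemma~\ref{can_choice} attaches to the former is mapped by $\widehat\rho_t$ to the node attached to the latter, i.e.\ $\widehat\rho_t(\gamma(t,u))=\gamma(\psi(t),\psi(u))$. Gluing the $\widehat\rho_t$ yields a single map $\overline\psi$ of $\overline T$ that permutes the subtrees $\widehat T^t$ by $\psi$, is an isomorphism on each, and sends the bridge edge $\gamma(t,u)\gamma(u,t)$ to the bridge edge $\gamma(\psi(t),\psi(u))\gamma(\psi(u),\psi(t))$ of $\psi(tu)\in E(T)$; hence $\overline\psi\in\Aut(\overline T)$ and $\varphi[\overline P_x]=\overline P_{\overline\psi(x)}$ for all $x$, so $\varphi$ induces an automorphism of $\overline\Tcal$.

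I expect the main obstacle to be the bookkeeping in (vi): one must make sure that Construction~\ref{constr:T_hat} and the selection in Lemma~\ref{can_choice} are functorial with respect to graph isomorphisms, not merely invariant under automorphisms of a single fixed graph, so that the locally defined maps $\widehat\rho_t$ and the chosen nodes $\gamma(t,u)$ assemble into one global automorphism of $\overline\Tcal$. Everything else is the routine two-case analysis of the edges of $\overline T$.
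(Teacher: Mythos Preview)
Your proposal is correct and follows essentially the same approach as the paper: the paper's proof is extremely terse, simply noting that (i)--(v) ``follow by construction'' together with Remark~\ref{gluing_aux_td}\,(i) and the equality $\overline{P}_{\gamma(t,u)}\cap\overline{P}_{\gamma(u,t)}=P_t\cap P_u$, and that (vi) follows from Remark~\ref{gluing_aux_td}\,(v) and Lemma~\ref{can_choice}. Your edge-by-edge analysis (bridge vs.\ internal) and your explicit functoriality argument for (vi) are exactly the details the paper suppresses; in particular you correctly identify that Lemma~\ref{can_choice} must be read as producing a \emph{specific} node (the centre of $W$, as in its proof) so that $\gamma(\,\cdot\,,\,\cdot\,)$ is natural with respect to isomorphisms, not just invariant under automorphisms.
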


\begin{proof}
    Whilst (i) is true by construction, the nodes added in the construction of $\widehat{\Tcal}^t$ are hub nodes by definition, yielding (ii).
    Furthermore, (iii), (iv) and (v) follow by construction with Remark~\ref{gluing_aux_td}\,(i) and the observation that for all ${tu \in E(T)}$ the adhesion sets ${\overline{P}_{\gamma(t,u)} \cap \overline{P}_{\gamma(u,t)}}$ and ${P_t \cap P_u}$ are equal.
    Finally, (iv) follows with Remark~\ref{gluing_aux_td}\,(v) and Lemma~\ref{can_choice} from the construction.
\end{proof}

\subsection{Obtaining tree-decompositions from almost nested sets of separations}

Theorem~\ref{tree_from_N} gives a way how to transform a nested set of separations into a tree-decomposition.
In this subsection, we extend this to sets of `almost nested' separations. 

For a separation $(A,B)$ of $G$ and $X \subseteq V(G)$, the pair $(A \cap X, B \cap X)$ is a separation of $G[X]$, which we call the \emph{restriction} $(A,B)\restricted X$ of $(A,B)$ to~$X$.
Note that $(A,B)\restricted X$ is proper if and only if $(A,B)$ separates $X$.
The \emph{restriction} $S\restricted X$ to $X$ of a set $S$ of separations of $G$ to $X$ consists of the proper separations $(A,B)\restricted X$ with $(A,B) \in S$.

For a set $S$ of separations of $G$ let $\min_{\mathrm{ord}}(S)$ denote the set of those separations in $S$ with minimal order.
Note that if $S$ is non-empty, then so is $\min_{\mathrm{ord}}(S)$, and that $\min_{\mathrm{ord}}$ commutes with graph isomorphisms.

A finite sequence $(\beta_0,\ldots,\beta_n)$ of vertex sets of $G$ is called an $S$-\emph{focusing sequence} if
\begin{itemize}
    \item[(F1)] $\beta_0 = V(G)$;
    \item[(F2)] for all $i < n$, the separation system $N_{\beta_i}$ generated by $\min_{\mathrm{ord}}(S\restricted \beta_i)$ is non-empty and is nested with the set $S\restricted \beta_i$;
    \item[(F3)] $\beta_{i+1}$ is an $N_{\beta_i}$-block of $G[\beta_i]$ .
\end{itemize}

An $S$-focusing sequence $(\beta_0,\ldots,\beta_n)$ is \emph{good} if
\begin{itemize}
    \item[(F$\ast$)] the separation system $N_{\beta_n}$ generated by $\min_{\mathrm{ord}}(S\restricted \beta_n)$ is nested with the set $S\restricted \beta_{n}$.
\end{itemize}

Note that for an $S$-focusing sequence $(\beta_0, \ldots, \beta_n)$ we obtain $\beta_n \subseteq \beta_{n-1} \subseteq \ldots \subseteq \beta_0$.
The set of all $S$-focusing sequences is partially ordered by extension, where $(V(G))$ is the smallest element.
The subset $\Fcal_S$ of all good $S$-focusing sequences is downwards closed in this partial order.

\begin{lemma}\label{focusing_claim1}
    Let $(\beta_0, \ldots, \beta_n) \in \Fcal_S$ and let $(A,B) \in S$. If $(A,B)\restricted\beta_{n}$ is proper, then $A \cap B \subseteq \beta_n$.
\end{lemma}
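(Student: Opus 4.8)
The plan is to induct on $n$, the base case $n=0$ being immediate since $\beta_0=V(G)$ and so $A\cap B\subseteq\beta_0$ regardless of properness. For the inductive step let $n\ge 1$ and assume the statement for all shorter good $S$-focusing sequences; since $\Fcal_S$ is downwards closed, $(\beta_0,\ldots,\beta_{n-1})\in\Fcal_S$. Suppose $(A,B)\restricted\beta_n$ is proper, i.e.\ $(A,B)$ separates $\beta_n$. As $\beta_n\se\beta_{n-1}$, the separation $(A,B)$ also separates $\beta_{n-1}$, so $(A,B)\restricted\beta_{n-1}$ is proper and the inductive hypothesis gives $A\cap B\se\beta_{n-1}$. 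It then remains to improve this to $A\cap B\se\beta_n$, and here I would use that $\beta_n$ is an $N_{\beta_{n-1}}$-block of $G[\beta_{n-1}]$ by (F3) while $(A,B)$ still separates $\beta_n$.

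The heart of the argument is the following elementary observation, which I would record as a claim: \emph{if $N$ is a set of separations of a graph $H$, $\beta$ is a maximal $N$-inseparable vertex set, and $(C,D)$ is a separation of $H$ that is nested with every separation in $N$ and separates $\beta$, then $C\cap D\se\beta$.} To prove it, suppose for contradiction that some $v\in (C\cap D)\sm\beta$. By maximality of $\beta$ the set $\beta\cup\{v\}$ is not $N$-inseparable, so some $(E,F)\in N$ separates it; since $(E,F)$ does not separate $\beta$, we have $\beta\se E$ or $\beta\se F$. The case $\beta\se E\cap F$ is impossible, as then $\beta$ meets neither $E\sm F$ nor $F\sm E$, leaving the single extra vertex $v$ to meet both; so, after possibly swapping $E$ and $F$, we may assume $\beta\se F$, $\beta\not\se E$ and $v\in E\sm F$. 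Now $(C,D)$ is nested with $(E,F)$, so one of $(C,D)\le(E,F)$, $(E,F)\le(C,D)$, $(C,D)\le(F,E)$, $(F,E)\le(C,D)$ holds. In the first and last, $\beta\se F$ forces $\beta\se D$ resp.\ $\beta\se C$, contradicting that $(C,D)$ separates $\beta$; in the middle two, $v\in C\cap D$ lands in $F$ (from $D\se F$ resp.\ $C\se F$), contradicting $v\in E\sm F$. Hence no such $v$ exists.

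Finally I would apply the claim with $H=G[\beta_{n-1}]$, $N=N_{\beta_{n-1}}$, $\beta=\beta_n$ and $(C,D)=(A,B)\restricted\beta_{n-1}=(A\cap\beta_{n-1},B\cap\beta_{n-1})$. The separation $(A,B)\restricted\beta_{n-1}$ is proper, hence lies in $S\restricted\beta_{n-1}$, which by (F2) is nested with $N_{\beta_{n-1}}$; it separates $\beta_n$ because $(A,B)$ does and $\beta_n\se\beta_{n-1}$; and its separator equals $A\cap B\cap\beta_{n-1}=A\cap B$ by the inductive hypothesis. The claim therefore yields $A\cap B\se\beta_n$, completing the induction.

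I expect the only delicate point to be the case analysis inside the claim — in particular observing that $\beta\se E\cap F$ cannot occur and correctly matching each of the four comparability cases to the contradiction it produces (two against ``$(C,D)$ separates $\beta$'', two against ``$v\notin F$''). The surrounding steps are routine bookkeeping with restrictions, using repeatedly that $(A,B)\restricted X$ is proper exactly when $(A,B)$ separates $X$ and that separating a vertex set implies separating any superset of it.
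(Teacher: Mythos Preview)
Your proof is correct and follows essentially the same approach as the paper. The only organisational difference is that the paper, instead of inducting on $n$, argues directly: it fixes witnesses $a\in(\beta_n\cap A)\sm B$ and $b\in(\beta_n\cap B)\sm A$, takes $j<n$ maximal with $v\in\beta_j$, and then runs exactly your four-case nestedness contradiction at that single index $j$ (with $(C,D)\in N_{\beta_j}$ separating $v$ from $\beta_{j+1}$). Your induction together with the abstracted claim amounts to the same thing processed one step at a time.
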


\begin{proof}
    By assumption $(A,B)\restricted\beta_n$ is proper, hence there are $a \in (\beta_n \cap A) \setminus B$ and $b \in (\beta_n \cap B) \setminus A$.
    Since $\beta_n \subseteq \beta_{i}$ for all $i \leq n$ the separations $(A,B)\restricted\beta_i$ are proper as well.
    Suppose for a contradiction there is a vertex $v \in (A \cap B) \sm \beta_n$.
    Let $j < n$ be maximal with  $v \in \beta_j$.
    Since $\beta_{j+1}$ is an $N_{\beta_j}$-block of $G[\beta_{j}]$, there is a separation $(C,D) \in N_{\beta_j}$ with $v \in C \sm D$ and $\{a,b\} \subseteq \beta_n \subseteq \beta_{j+1} \subseteq D$.

    Now $a$, $b$ and $v$ witness that $(A,B)\restricted\beta_j$ and $(C,D)$ are not nested:
    Indeed, $a$ witnesses that $D$ is not a subset of $B\cap \beta_j$. Similarly, $b$ witnesses that $D$ is not a subset of $A\cap \beta_j$. But $v$ witnesses that neither $A\cap \beta_j$ nor $B\cap \beta_j$ is a subset of $D$.
    Thus we get a contradiction to the assumption that $N_{\beta_j}$ is nested with the set $S\restricted\beta_j$.
\end{proof}

A set $S$ of separations of $G$ is \emph{almost nested} if all $S$-focusing sequences are good.
In this case the maximal elements of $\Fcal_S$ in the partial order are exactly the $S$-focusing sequences $(\beta_0, \ldots, \beta_n)$ with $N_{\beta_n} = \emptyset$, and hence $S\restricted\beta_n = \emptyset$.

\begin{lemma}\label{focusing_claim2}
    Let $S$ be an almost nested set of separations of $G$.
    \begin{itemize}
        \item[(i)] If $(\beta_0, \ldots, \beta_n) \in \Fcal_S$ is maximal, then $\beta_n$ is an $S$-block.
        \item[(ii)] If $b$ is an $S$-block, there is a maximal $(\beta_0, \ldots, \beta_n) \in \Fcal_S$ with $\beta_n = b$.
    \end{itemize}
\end{lemma}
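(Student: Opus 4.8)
By the remark preceding the lemma, a sequence in $\Fcal_S$ is maximal exactly when $N_{\beta_n}=\emptyset$, i.e.\ $S\restricted\beta_n=\emptyset$, and this says precisely that no separation of $S$ separates $\beta_n$, that is, $\beta_n$ is $S$-inseparable. So for (i) it remains to show that this $S$-inseparable set is \emph{maximal}, and for (ii) to produce a maximal sequence ending at the prescribed block $b$. I will use two easy points throughout: each $\beta_i$ is non-empty (singletons are inseparable, so the blocks $\beta_{i+1}$ from (F3) are non-empty by induction), and $\beta_{i+1}\subsetneq\beta_i$ whenever $N_{\beta_i}\neq\emptyset$ (a non-empty $N_{\beta_i}$ contains a proper separation of $G[\beta_i]$, so $\beta_i$ is not $N_{\beta_i}$-inseparable). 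I will also use that, since $S$ is almost nested, \emph{every} $S$-focusing sequence $(\beta_0,\dots,\beta_i)$ satisfies (F$\ast$) at its last term, i.e.\ $N_{\beta_i}$ is nested with $S\restricted\beta_i$; hence such a sequence can be extended whenever $N_{\beta_i}\neq\emptyset$.

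\textbf{Proof of (ii).} I would build the sequence greedily, maintaining $b\subseteq\beta_i$. Having built $(\beta_0,\dots,\beta_i)\in\Fcal_S$ with $b\subseteq\beta_i$, stop if $N_{\beta_i}=\emptyset$; otherwise $b$, being $S$-inseparable and contained in $\beta_i$, is $N_{\beta_i}$-inseparable, so it lies in some $N_{\beta_i}$-block of $G[\beta_i]$, which I take as $\beta_{i+1}$. By the first paragraph (F1)--(F3) and (F$\ast$) persist, and by the shrinking observation plus finiteness of $G$ the process terminates in a maximal $(\beta_0,\dots,\beta_n)\in\Fcal_S$ with $b\subseteq\beta_n$ and $S\restricted\beta_n=\emptyset$. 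Then $\beta_n$ is $S$-inseparable and contains the maximal $S$-inseparable set $b$, so $\beta_n=b$.

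\textbf{Proof of (i).} I would induct on $\abs{V(G)}$. If $n=0$ then $\beta_0=V(G)$ is $S$-inseparable, hence the unique $S$-block. If $n\ge1$, set $S':=S\restricted\beta_1$. First I would check that $(\beta_1,\dots,\beta_n)$ is a maximal $S'$-focusing sequence of $G[\beta_1]$: here one uses that $S'\restricted\beta_i=S\restricted\beta_i$ for $i\ge1$ (a separation separating $\beta_i\subseteq\beta_1$ separates $\beta_1$ too), that $\beta_1$ is an $N_{\beta_0}$-block of $G$, and the maximality characterisation; moreover $S'$ is almost nested (prefixing $V(G)$ to an $S'$-focusing sequence yields an $S$-focusing sequence, which is good, since $\beta_1$ is an $N_{\beta_0}$-block), and $\abs{\beta_1}<\abs{V(G)}$ because $N_{\beta_0}\neq\emptyset$. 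By the inductive hypothesis, $\beta_n$ is an $S'$-block of $G[\beta_1]$, i.e.\ a maximal $S'$-inseparable subset of $\beta_1$. Now to the lifting. Certainly $\beta_n$ is $S$-inseparable; for maximality let $X\supsetneq\beta_n$ be $S$-inseparable. Restricting, $X\cap\beta_1$ is $S'$-inseparable and contains $\beta_n$, so $X\cap\beta_1=\beta_n$, and there is $v\in X\setminus\beta_1$. Since $X$ is $N_{\beta_0}$-inseparable and $\beta_1$ is an $N_{\beta_0}$-block, the tree-decomposition $\Tcal(N_{\beta_0})$ from Theorem~\ref{tree_from_N} yields a separation $(C,D)\in N_{\beta_0}$, induced by the edge at the node of $\beta_1$ towards a node containing $v$, with $\beta_1\subseteq D$, $v\in C\setminus D$ (a common vertex of $v$'s part and a $D$-side part would lie in the adhesion $C\cap D\subseteq\beta_1$, but $v\notin\beta_1$), and $C\cap D\subseteq\beta_1$. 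As $X$ is $N_{\beta_0}$-inseparable and meets $C\setminus D$, we get $X\subseteq C$, hence $\beta_n\subseteq C\cap D$. Since $(C,D)$ is nested with every separation in $S\restricted\beta_0$, a short case analysis over the four ways of being nested shows that no separation of $S$ separates $C\cap D$; so $C\cap D$ is $S$-inseparable, hence $S'$-inseparable and contained in $\beta_1$, and maximality of $\beta_n$ gives $C\cap D=\beta_n$.

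\textbf{The main obstacle} is closing this last step: one is left with the degenerate configuration in which $\beta_n$ \emph{equals} a separator $C\cap D$ (with the separating separation, or its inverse, a minimum-order separation of $S$), while the $S$-inseparable set $X$ protrudes past $D$ on the $C$-side. To contradict this I would descend once more: replace $\beta_1$ by the $N_{\beta_0}$-block $b^\ast\supseteq X$ on the $C$-side (which has fewer than $\abs{V(G)}$ vertices, so the inductive hypothesis applies inside $G[b^\ast]$), and exploit that $\beta_n=C\cap D$ is simultaneously the last term of a maximal focusing sequence inside $\beta_1$ and a subset of $b^\ast$ lying in an adhesion set incident to its node; comparing this with the maximal $S\restricted b^\ast$-inseparable sets produced by focusing in $G[b^\ast]$, and using the minimality of the orders of the separations in $N_{\beta_0}$ together with Lemma~\ref{focusing_claim1}, should force a vertex of $\beta_n$ to lie strictly on one side of the separating separation, contradicting $\beta_n\subseteq C\cap D$. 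All of the argument except this bookkeeping around the degenerate case is routine.
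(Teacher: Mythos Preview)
Your proof of (ii) is correct and essentially identical to the paper's: build the sequence greedily keeping $b$ inside, observe that the terminal $\beta_n$ is $S$-inseparable and contains the $S$-block $b$, hence equals it.

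Your proof of (i), however, is vastly more complicated than the paper's and contains a genuine gap. The paper's argument is a single sentence: for every $v\in V(G)\setminus\beta_n$ there is an index $i<n$ (namely the one with $v\in\beta_i\setminus\beta_{i+1}$) and a separation in $N_{\beta_i}$ separating $v$ from $\beta_n$; since each $N_{\beta_i}$ is generated by restrictions of separations in $S$, this lifts to a separation of $S$ separating $\{v\}\cup\beta_n$, and maximality follows. Your induction on $\abs{V(G)}$, passing to $G[\beta_1]$ and then trying to lift back, creates exactly the ``degenerate configuration'' you describe --- and your proposed resolution (``descend once more \ldots\ should force a vertex of $\beta_n$ to lie strictly on one side'') is not a proof. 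The phrases ``should force'' and ``bookkeeping'' hide precisely the step that needs to be carried out.

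The subtle point that \emph{both} approaches must address is ruling out $\beta_n\subseteq C\cap D$ for the relevant $(C,D)\in N_{\beta_i}$. This can be done cleanly as follows. By Lemma~\ref{focusing_claim1} the orders in $N_{\beta_j}$ equal the full orders $\abs{A\cap B}$ of the underlying $(A,B)\in S$, and since every separation in $N_{\beta_j}$ becomes improper on $\beta_{j+1}$, these minimum orders strictly increase with $j$. If $i<n-1$ and $\beta_n\subseteq C\cap D$, then $\abs{\beta_n}\le\abs{C\cap D}<\text{rank}(\beta_{n-1})$; but in $\Tcal(N_{\beta_{n-1}})$ the part $\beta_n$ meets an adhesion set of size $\text{rank}(\beta_{n-1})>\abs{\beta_n}$, which is absurd. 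If $i=n-1$, the same adhesion count forces $\beta_n=C\cap D$; then every neighbour of the node for $\beta_n$ in $\Tcal(N_{\beta_{n-1}})$ has part equal to $\beta_n$ (by maximality of the $N_{\beta_{n-1}}$-block $\beta_n$), and iterating gives $\beta_{n-1}=\beta_n$, a contradiction. So the obstacle dissolves --- but via the paper's direct route, not via your induction.
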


\begin{proof}
    Let $(\beta_0, \ldots, \beta_n) \in \Fcal_S$ be maximal.
    Then $S\restricted\beta_n$ is empty, i.e.\ no $(A,B) \in S$ induces a proper separation of $G[\beta_n]$.
    Hence $\beta_n$ is $S$-inseparable.
    For every $v \in V(G) \sm \beta_n$ there is an $i < n$ and a separation in $N_{\beta_i}$ separating $v$ from $\beta_n$. Hence $\beta_n$ is an $S$-block.
    
    Conversely given an $S$-block $b$, let $(\beta_0, \ldots, \beta_n) \in \Fcal_S$ be maximal with the property $b \subseteq \beta_n$, which exists since $(V(G)) \in \Fcal_S$ and since $\Fcal_S$ is finite.
    Since $b$ is $N_{\beta_n}$-inseparable, there is some $N_{\beta_n}$-block $\beta_{n+1}$ containing $b$.
    The choice of $(\beta_0, \ldots, \beta_n)$ implies that $(\beta_0, \ldots, \beta_{n+1}) \notin \Fcal_S$ and hence $N_{\beta_n} = \emptyset$, i.e.\ $(\beta_0, \ldots, \beta_n)$ is a maximal element of $\Fcal_S$.
    Thus $\beta_n$ is an $S$-block with $b \subseteq \beta_n$ and hence $b = \beta_n$.
\end{proof}

\begin{construction}\label{main_construction}
    Let $S$ be an almost nested set of separations of $G$.
    We recursively construct for every $S$-focusing sequence $(\beta_0, \ldots, \beta_n)$ a tree-decomposition $\Tcal^{\beta_n}$ of $G[\beta_n]$ so that the tree-decomposition $\Tcal^{V(G)} \eqdef \Tcal(S)$ for the smallest $S$-focusing sequence $(V(G))$ is a tree-decomposition of~$G$.
    
    For each maximal $S$-focusing sequence $(\beta_0, \ldots, \beta_m)$ we take the trivial tree-decomposition of $G[\beta_m]$ with only a single part.
    Suppose that $\Tcal^\beta$ has already been defined for every successor $(\beta_0, \ldots, \beta_n, \beta)$ of $(\beta_0, \ldots, \beta_n)$.
    To define $\Tcal^{\beta_n}$ we start with the tree-decomposition $\Tcal(N_{\beta_n})$ of $G[\beta_n]$ as given by Theorem~\ref{tree_from_N}.
    For each hub node $h$ we take the trivial tree-decomposition of $H_h$ and for each node $t$ whose part is an $N_{\beta_n}$-block $\beta$, we take $\Tcal^{\beta}$ given from the $S$-focusing sequence $(\beta_0, \ldots, \beta_n, \beta)$.
    This is indeed a tree-decomposition of the torso $H_t$, which we will show in Theorem~\ref{tree_from_almost_nested}.
    Hence we can apply Construction~\ref{constr:T_bar} to $\Tcal(N_{\beta_n})$ and the family of tree-decompositions of the torsos to get $\Tcal^{\beta_n}$.
\end{construction}

Given an $S$-focusing sequence $(\beta_0, \ldots, \beta_n)$, any two separations in $N_{\beta_n}$ have the same order $\ell$.
We call $\ell$ the \emph{rank} of $(\beta_0, \ldots, \beta_n)$.
If $N_{\beta_n}$ is empty, we set the rank to be $\infty$.

For an almost nested set $S$ of separations of $G$ two $S$-focusing sequences $(\beta_0, \ldots, \beta_n)$ and $(\alpha_0, \ldots \alpha_m)$ are \emph{similar} if there is an automorphism $\psi$ of~$G$ inducing an isomorphism between $G[\beta_n]$ and $G[\alpha_m]$.
Similar $S$-focusing sequences clearly have the same rank.
If $S$ is canonical, then $\psi$ induces an isomorphism between $\Tcal(N_{\beta_n})$ and $\Tcal(N_{\alpha_m})$ as obtained from Theorem~\ref{tree_from_N}.

\begin{theorem}\label{tree_from_almost_nested}
    The tree-decomposition $\Tcal(S)$ as in Construction~\ref{main_construction} is well-defined and satisfies the following:
    \begin{enumerate}
	    \item[(i)] every $S$-block of $G$ is a part of $\Tcal(S)$;
	    \item[(ii)] every part of $\Tcal(S)$ is either an $S$-block of $G$ or a hub;
	    \item[(iii)] for every separation $(A,B)$ induced by $\Tcal(S)$ there is a separation $(A',B') \in S$ such that $A \cap B = A' \cap B'$;
	    \item[(iv)] if $S$ is canonical, then so is $\Tcal(S)$.
    \end{enumerate}
\end{theorem}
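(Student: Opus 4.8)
The plan is to prove everything by induction along the partial order on $S$-focusing sequences, showing simultaneously that each $\Tcal^{\beta_n}$ built in Construction~\ref{main_construction} is a genuine tree-decomposition of $G[\beta_n]$ (which in particular entails the pending claim that the pieces $\Tcal^\beta$ glue onto $\Tcal(N_{\beta_n})$ correctly as tree-decompositions of the torsos $H_t$). Since $\Fcal_S$ is finite and $S$ is almost nested, every maximal $S$-focusing sequence has $N_{\beta_m}=\emptyset$ by Lemma~\ref{focusing_claim2}, so the base case is the trivial one-part decomposition of $G[\beta_m]=G[b]$ for an $S$-block $b$; all four assertions hold there vacuously or trivially.

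For the inductive step, fix an $S$-focusing sequence $(\beta_0,\ldots,\beta_n)$ and assume $\Tcal^\beta$ is a tree-decomposition of $G[\beta]$ satisfying (i)--(iv) for each $N_{\beta_n}$-block $\beta$. The first substantive step is to verify that for a node $t$ of $\Tcal(N_{\beta_n})$ whose part is an $N_{\beta_n}$-block $\beta$, the decomposition $\Tcal^\beta$ really is a tree-decomposition of the torso $H_t$, not just of $G[\beta]$: this amounts to checking that no separation induced by $\Tcal^\beta$ separates an adhesion set of $\Tcal(N_{\beta_n})$ at $t$. By the inductive hypothesis (iii) every such separation has the same separator as some $(A',B')\in S$ with $(A',B')\restricted\beta$ proper; by Lemma~\ref{focusing_claim1} that separator lies inside $\beta_{\text{(the relevant block)}}$, and the adhesion sets at $t$ are translates of separators of separations in $N_{\beta_n}\se S$, which by the nestedness clause (F2)/(F$\ast$) are nested with $S\restricted\beta_n$; a short argument with Remark~\ref{corner_lemma} then shows no such separation can cross an adhesion set. (For hub nodes the torso decomposition is trivial, so nothing is required.) Having this, Construction~\ref{constr:T_bar} applies and $\Tcal^{\beta_n}$ is a tree-decomposition of $G[\beta_n]$; running this up to $(V(G))$ gives that $\Tcal(S)$ is well-defined.

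Then (i)--(iv) follow by combining Theorem~\ref{tree_from_N}, Lemma~\ref{gluing_lemma} and the inductive hypothesis. For (ii): every part of $\Tcal^{\beta_n}$ is, by Lemma~\ref{gluing_lemma}(i)--(ii), either a hub node introduced by the gluing, or a part of $\Tcal(N_{\beta_n})$ — which by Theorem~\ref{tree_from_N}(ii) is an $N_{\beta_n}$-block or a hub — or, in the block case, a part of some $\Tcal^\beta$, which by the inductive (ii) is an $S\restricted$-block of $G[\beta]$ or a hub; and an inseparable subset of $G[\beta]$ that is maximal is checked to be an $S$-block of $G$ using Lemma~\ref{focusing_claim2}. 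For (i): given an $S$-block $b$, Lemma~\ref{focusing_claim2}(ii) gives a maximal focusing sequence ending in $b$, and chasing it down the recursion shows $b$ survives as a part at each stage (the gluing in Construction~\ref{constr:T_bar} only subdivides and attaches at chosen nodes, never destroying existing parts, by Lemma~\ref{gluing_lemma}(i)). For (iii): every separation induced by $\Tcal(S)$ is, by Lemma~\ref{gluing_lemma}(iii), either induced by some $\Tcal(N_{\beta_n})$ — whose induced separations lie in $N_{\beta_n}\se S$ by Theorem~\ref{tree_from_N}(iii), so their separators are separators of members of $S$ — or induced inside some torso by a $\Tcal^\beta$, in which case the inductive (iii) plus Lemma~\ref{gluing_lemma}(v) identifies its separator with that of a member of $S$; one must only check the separator is not enlarged when the $G[\beta]$-separation is re-expanded to $G$, which is exactly Lemma~\ref{focusing_claim1}. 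For (iv): if $S$ is canonical then each $N_{\beta_n}$ is canonical, so $\Tcal(N_{\beta_n})$ is canonical by Theorem~\ref{tree_from_N}, similar focusing sequences have isomorphic recursively-built decompositions, the family of torso decompositions is canonical in the sense of Construction~\ref{constr:T_bar}, and Lemma~\ref{gluing_lemma}(vi) propagates canonicity.

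I expect the main obstacle to be the first substantive step: proving cleanly that the recursively constructed $\Tcal^\beta$ is a tree-decomposition of the \emph{torso} $H_t$ rather than merely of $G[\beta]$. This is where almost-nestedness is genuinely used — via Lemma~\ref{focusing_claim1} to control where separators of $S$-separations land, and via the nestedness clauses in the definition of a focusing sequence to guarantee that the separations one uses at level $n$ do not cross the separations coming from deeper levels. The bookkeeping of restricting separations from $G$ to $G[\beta_i]$ and back, while keeping track of separators, is the delicate part; everything else is assembling the cited lemmas.
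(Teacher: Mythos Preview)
Your approach matches the paper's: both induct over focusing sequences (the paper phrases this as induction on the rank, which is equivalent since successors have strictly larger rank) and assemble the pieces via Theorem~\ref{tree_from_N} and Lemma~\ref{gluing_lemma}. There is, however, a genuine gap at exactly the point you flag as the main obstacle.

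Your inductive form of~(iii) --- that every separation induced by $\Tcal^\beta$ has the \emph{same separator} as some $(A',B')\in S$ --- is too weak to show that $\Tcal^\beta$ decomposes the torso~$H_t$. Two separations of $G[\beta]$ with the same separator can separate entirely different vertex sets, so from $A\cap B = A'\cap B'$ alone you cannot deduce that $(A,B)$ leaves each adhesion set of $\Tcal(N_{\beta_n})$ on one side; your appeal to Remark~\ref{corner_lemma} does not bridge this, and the extra clause ``with $(A',B'){\restricted}\beta$ proper'' is both unjustified by~(iii) and still insufficient. The paper therefore carries a strictly stronger inductive hypothesis, its property~(d): every separation $(A,B)$ induced by $\Tcal^\beta$ \emph{is} a restriction $(A',B'){\restricted}\beta$ for some $(A',B')\in S$. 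From this, $(A',B'){\restricted}\beta_n$ is proper and hence, by~(F$\ast$), nested with every $(C,D)\in N_{\beta_n}$; since the adhesion set $C\cap D$ lies inside $\beta$, the further restriction $(A,B)=(A',B'){\restricted}\beta$ cannot separate it. The paper also carries an auxiliary hypothesis~(c) that all induced separations are proper, which together with Lemma~\ref{focusing_claim1} is what converts~(d) back into the theorem's~(iii) at the top level.

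One small slip: you write $N_{\beta_n}\subseteq S$, but the elements of $N_{\beta_n}$ are restrictions $(A',B'){\restricted}\beta_n$ of separations in $S$, not members of $S$ itself; this distinction matters when invoking~(F$\ast$).
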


\begin{proof}
    We show inductively that for any $S$-focusing sequence $(\beta_0, \ldots, \beta_n)$ the tree-decomposition $\Tcal^{\beta_n}$ has the following properties:
    \begin{itemize}
        \item[(a)] every $S$-block included in $\beta_n$ is a part of $\Tcal^{\beta_n}$;
        \item[(b)] every part of $\Tcal^{\beta_n}$ is either an $S$-block or a hub;
        \item[(c)] every separation $(A,B)$ induced by $\Tcal^{\beta_n}$ is proper;
        \item[(d)] and for every separation $(A,B)$ induced by $\Tcal^{\beta_n}$ there is a separation ${(A',B') \in S}$ and an $S$-focusing sequence $(\beta_0, \ldots, \beta) \geq (\beta_0, \ldots, \beta_n)$ 
            such that $(A',B')\restricted\beta = (A,B)$.
    \end{itemize}
    Furthermore we show for canonical $S$ by induction, that 
    \begin{itemize}
        \item[(e)] if $(\alpha_0, \ldots, \alpha_m)$ and $(\beta_0, \ldots, \beta_n)$ are similar, then $\Tcal^{\alpha_m}$ and $\Tcal^{\beta_n}$ are isomorphic;
        \item[(f)] $\Tcal^{\beta_n}$ is canonical.
    \end{itemize}
    
    The tree-decompositions for the maximal $S$-focusing sequences satisfy (a) and (b) by Lemma~\ref{focusing_claim2}, and (c) and (d) since their trees do not have any edges.
    If for two $S$-blocks $b_1$ and $b_2$ there is an isomorphism between $G[b_1]$ and $G[b_2]$ induced by an automorphism of $G$, then clearly the tree-decompositions are isomorphic.
    Hence (e) and (f) hold for all $S$-focusing sequences of rank $\infty$.
    
    Suppose for our induction hypothesis that for every $S$-focusing sequence $(\alpha_0, \ldots, \alpha_m)$ with rank greater than $r$ the tree-decomposition $\Tcal^{\alpha_m}$ of $G[\alpha_m]$ has the desired properties.
    Let $(\beta_0, \ldots, \beta_n)$ be an $S$-focusing sequence of rank $r$.
    Then for each successor $(\beta_0, \ldots, \beta_n, \beta)$ the tree-decomposition $\Tcal^\beta$ is indeed a tree-decomposition of the corresponding torso:
    for a separation $(A,B)$ induced by $\Tcal^\beta$ consider $(A',B')$ as given in (d).
    By (F$\ast$) we obtain that $(A',B')\restricted\beta_n = (A,B)$ is nested with $N_{\beta_n}$, hence $(A,B)$ does not separate any adhesion set in $H_t$.
    Hence $\Tcal^{\beta_n}$ is indeed well-defined.
    
    Lemma~\ref{gluing_lemma}\,(i), (ii) and (iii) and the induction hypothesis yield (a), (b) and (c) for $\Tcal^{\beta_n}$.
    Also by Lemma~\ref{gluing_lemma}\,(iii) for a separation $(A,B)$ induced by $\Tcal^{\beta_n}$ either ${(A,B) \in N_{\beta_n} \subseteq S\restricted\beta_n}$ or $(A,B)$ induces a separation in $\Tcal^\beta$ for an $N_{\beta_n}$-block $\beta$ on the corresponding torso.
    In the first case $(\beta_0, \ldots, \beta_n)$ is the desired $S$-focusing sequence for (d) and in the second case the induction hypothesis yields $(A',B') \in S$ and the desired $S$-focusing sequence extending $(\beta_0, \ldots, \beta_n, \beta)$.
    Hence (d) holds for $\Tcal^{\beta_n}$.
    
    Suppose $S$ is canonical.
    Let $(\alpha_0, \ldots, \alpha_m)$ be similar to $(\beta_0, \ldots, \beta_n)$.
    Then every automorphism of $G$ that witnesses the similarity also witnesses that $\Tcal(N_{\alpha_m})$ and $\Tcal(N_{\beta_n})$ are isomorphic.
    Hence any torso of $\Tcal(N_{\alpha_m})$ is similar to the corresponding torso of $\Tcal(N_{\beta_n})$ and by induction hypothesis the tree-decompositions of the torsos are isomorphic. 
    Therefore following Construction~\ref{constr:T_bar} yields (e).
    If two torsos $H_t$ and $H_u$ of $\Tcal(N_{\beta_n})$ are similar, then either $V(H_t)$ and $V(H_u)$ are $N(\beta_n)$-blocks whose corresponding $S$-focusing sequences are similar and have rank greater than $r$, or they are hubs.
    If they are $N_{\beta_n}$-blocks, the chosen tree-decompositions are isomorphic by the induction hypothesis.
    If they are hubs, the chosen trivial tree-decompositions are isomorphic as witnessed by every automorphism of $G$ witnessing the similarity of $H_t$ and $H_u$.
    Hence this family of tree-decompositions of the torsos of $\Tcal(N_{\beta_n})$ is canonical and with Lemma~\ref{gluing_lemma}\,(vi) we get (f).
        
    Inductively the tree-decomposition $\Tcal^{V(G)} = \Tcal(S)$ of $G$ satisfies (i), (ii) and (iv) by (a), (b) and (f).
    Finally, (iii) follows from (c), (d) and Lemma~\ref{focusing_claim1}.
\end{proof}

\subsection{Extending a nested set of separations}

In this subsection we give a condition for when we can extend a nested set of separations so that it distinguishes any two distinguishable profiles in a given set $\Pcal$ efficiently.

Let $N$ be a nested separation system of $G$ and $\Tcal(N) = \big(T,(P_t)_{t \in V(T)}\big)$ be the tree-decomposition of~$G$ as in Theorem~\ref{tree_from_N}.
Recall that a separation $(A,B)$ of $G$ nested with $N$ \emph{induces} a separation $(A \cap P_t, B \cap P_t)$ of each torso $H_t$. 
An {$\ell$-profile} $\widetilde{Q}$ of $H_t$ is \emph{induced} by a $k$-profile $Q$ of $G$ if for every ${(A',B') \in \widetilde{Q}}$ there is an $(A,B) \in Q$ which induces $(A',B')$ on $H_t$.

\begin{construction}\label{constr:profile_on_torso}
    Let $t \in V(T)$ and let $Q$ be a $k$-profile of $G$.
    We construct a profile $\widetilde{Q}^t$ of the torso $H_t$ which is induced by $Q$.
    \paragraph{ \bf Case 1:} $Q$ inhabits $P_t$.\\
    Let $(A,B)$ be a proper separation of $H_t$ of order less than $k$. 
    By Lemma~\ref{profiles_are_havens}, there is a unique component $C$ of $G - (A\cap B)$ with ${(V(G)\sm C,C\cup N(C))\in Q}$. 
    As~$Q$ is consistent and inhabits $P_t$, the set $C \cap P_t$ is non-empty and either a subset of $A \sm B$ or $B \sm A$, but not both.
    If $(C \cap P_t) \subseteq (B \sm A)$, then we let $(A,B)\in \widetilde{Q}^t$. Otherwise we let $(B,A)\in \widetilde{Q}^t$.
    
    \paragraph{ \bf Case 2:} $Q$ does not inhabit $P_t$ and ${(V(G) \sm C, C \cup N(C)) \notin Q}$ for all components $C$ of $G - P_t$.\\
    Let $(A,B)$ be a proper separation of $H_t$ of order less than $k$. 
    By Lemma~\ref{profiles_are_havens}, there is a unique component $C$ of $G - (A\cap B)$ with ${(V(G)\sm C,C\cup N(C))\in Q}$.
    Since $C$ is not a component of $G - P_t$, the set $C \cap P_t$ is non-empty by assumption, 
    and we define $\widetilde{Q}^t$ as above.
    \paragraph{ \bf Case 3:} $Q$ does not inhabit $P_t$ and there is a component $C$ of $G - P_t$ such that ${(V(G) \sm C, C \cup N(C)) \in Q}$.\\
    Let $m$ denote the size of the neighbourhood of $C$.
    Let $b$ be the $m$-block of~$H_t$ containing~$N(C)$.
    For $\widetilde{Q}^t$ we take the $m$-profile induced by $b$.
\end{construction}

The following is straightforward to check:
\begin{remark}\label{profile_on_torso1}
    The set $\widetilde{Q}^t$ as in Construction~\ref{constr:profile_on_torso} is a profile of~$H_t$ induced by~$Q$.
    Moreover, if $Q$ is $r$-robust, then so is $\widetilde{Q}^t$.
    \qed
\end{remark}

The next remark is a direct consequence of the relevant definitions.

\begin{remark} \label{profile_on_torso}
    Let $Q_1$ and $Q_2$ be profiles of $G$.
    \begin{enumerate}
        \item[(i)] If a separation $(A,B)$ of $G$ nested with $N$ distinguishes $Q_1$ and $Q_2$ efficiently, 
            then the induced separation ${(A \cap P_t, B \cap P_t)}$ of $H_t$ distinguishes $\widetilde{Q}_1^t$ and~$\widetilde{Q}_2^t$ efficiently for any part $P_t$ where it is proper;
        \item[(ii)] if a separation $(A,B)$ of some torso~$H_t$ distinguishes $\widetilde{Q}_1^t$ and~$\widetilde{Q}_2^t$, then any separation of $G$ that induces $(A,B)$ on $H_t$ distinguishes $Q_1$ and $Q_2$.
        \qed
    \end{enumerate}
\end{remark}

\begin{lemma}\label{refine_claim1}
    Let $Q_1$ and $Q_2$ be profiles of $G$ which are not already distinguished efficiently by $N$.
    Let $(A,B)$ distinguish them efficiently such that it is nested with $N$.
    Then there is a part $P_t$ of $\Tcal(N)$ such that the induced separation $(A \cap P_t, B \cap P_t)$ of the torso $H_t$ is proper.
\end{lemma}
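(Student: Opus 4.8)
The plan is to argue by contradiction: suppose that for every node $t$ of $T$ the induced separation $(A \cap P_t, B \cap P_t)$ of $H_t$ is improper, i.e.\ that $(A,B)$ does not separate any part $P_t$. I will show that this forces $(A,B)$ to behave, with respect to the tree $T$, exactly like one of the separations already in $N$, so that $N$ itself distinguishes $Q_1$ and $Q_2$ with a separation of order at most $\abs{A \cap B}$ — contradicting the hypothesis that $N$ does not distinguish them efficiently.

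First I would record the basic consequence of $(A,B)$ being nested with $N$: for every oriented edge $\vec{e}$ of $T$ inducing a separation $(C,D) \in N(\Tcal(N)) = N$, we have $(A,B) \le (C,D)$, $(A,B) \le (D,C)$, $(C,D) \le (A,B)$, or $(D,C) \le (A,B)$. Orient each edge of $T$ so that the induced separation $(C,D)$ satisfies $(C,D) \le (A,B)$ or $(D,C) \le (A,B)$ whenever possible; the point is to find a node $t$ which is a ``sink'' for this orientation, i.e.\ every edge at $t$ points towards $t$. Since $T$ is finite such a node exists once we check the orientation is well-defined and acyclic, which follows from the nestedness of $N$ together with the comparabilities above. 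At this sink node $t$, every separation $(C_i, D_i) \in N$ induced by an edge $t_i t$ at $t$ satisfies $(C_i, D_i) \le (A,B)$, hence $A \subseteq \bigcup \{P_s : s$ on the $C_i$-side$\}$ is controlled; concretely $(A \cap B) \subseteq P_t$ by the standard argument that the separator of $(A,B)$ lies in the part associated to such a sink, using that $(A,B)$ is proper and nested. This is essentially the analogue of Lemma~\ref{focusing_claim1}.

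Now comes the key step, which I expect to be the main obstacle: from ``$(A,B)$ does not separate $P_t$'' and ``$A \cap B \subseteq P_t$'', deduce that $(A,B)$ induces an \emph{improper} separation on the torso $H_t$ that is nevertheless a genuine separation of $H_t$ — and then conclude $P_t \subseteq A$ or $P_t \subseteq B$. Say $P_t \subseteq B$. Then $A \cap P_t = A \cap B \cap P_t$, so $A \setminus B$ contains no vertex of $P_t$, and by (T3) together with the sink property $A \setminus B$ is a union of parts lying strictly on one side; in particular $A \setminus B$ is contained in the ``$C_i$-side'' for a \emph{single} edge $t_i t$ at $t$ (otherwise two vertices of $A \setminus B$ in different branches would both have to route through $P_t$, forcing them into $B$). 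Thus $(A,B) \le (C_i, D_i)$ where $(C_i, D_i) \in N$ is the separation induced by that edge, and since both have separator inside $P_t$ and $A \cap B \subseteq C_i \cap D_i$, submodularity (Remark~\ref{submodularity}) or a direct count gives $\abs{C_i \cap D_i} \le \abs{A \cap B}$. Because $(A,B)$ distinguishes $Q_1$ and $Q_2$ and $(C_i, D_i) \ge (A,B)$ with consistency of the profiles, $(C_i, D_i)$ also distinguishes $Q_1$ and $Q_2$: from $(A,B) \in Q_1$ we could only get trouble if $(C_i, D_i) \notin Q_1$, i.e.\ $(D_i, C_i) \in Q_1$, but then consistency with $(A,B) \le (C_i,D_i)$ forces $(A,B) \in Q_1 \Rightarrow$ fine, and on the $Q_2$ side $(B,A) \in Q_2$ together with $(D_i, C_i) \le (B,A)$ gives $(D_i,C_i) \in Q_2$. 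Hence $(C_i, D_i) \in N$ distinguishes $Q_1$ from $Q_2$ and has order at most that of $(A,B)$, so $N$ distinguishes them efficiently — the desired contradiction. (The case $P_t \subseteq A$ is symmetric, replacing $(C_i, D_i)$ by its relevant counterpart.)

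The delicate points to get right in the write-up are: (1) verifying that the induced orientation of $T$ really has a sink, which needs the fact that $(A,B)$ is comparable with each member of the nested system $N$ in a \emph{consistent} direction — here I would lean on Remark~\ref{corner_lemma} and the tree structure rather than re-deriving it; (2) showing $A \cap B \subseteq P_t$ cleanly, for which the argument of Lemma~\ref{focusing_claim1} transfers almost verbatim with $T$ in place of the focusing sequence; and (3) the claim that $A \setminus B$ (respectively $B \setminus A$) cannot spread across two branches at the sink $t$ once $P_t$ lies entirely on one side — this is where properness of $(A,B)$ is used, since if $A \setminus B = \emptyset$ then $(A,B)$ was improper to begin with, contrary to hypothesis.
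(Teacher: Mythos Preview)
Your argument has a genuine gap at the claim that $A \sm B$ (taking $P_t \subseteq B$) lies in a single branch at the sink node~$t$. The justification --- that two vertices of $A \sm B$ in different branches ``would both have to route through $P_t$, forcing them into $B$'' --- is not valid: there is no connectivity requirement on $A \sm B$, and nestedness with $N$ does not prevent $A \sm B$ from meeting several branches at once. For a concrete configuration, take $T$ a star with centre $t$ and three leaves, $P_t = \{u,v,w\}$, each leaf part meeting $P_t$ in exactly two of these three vertices, and set $A \cap B = P_t$ with $A \sm B$ filling two of the branches and $B \sm A$ the third. Then $(A,B)$ is nested with $N$, is not in $N$, and the induced separation is improper at \emph{every} node; so your global-contradiction strategy, which at this stage has not yet invoked the profiles, would have to succeed here --- but it cannot. (The lemma itself is not contradicted: in this configuration no pair of profiles satisfies its hypotheses.) A secondary issue: even if $A \sm B$ did lie in one branch~$i$, neither $(A,B) \leq (C_i,D_i)$ nor the order bound $\abs{C_i \cap D_i} \leq \abs{A \cap B}$ follows from what you have; the relation $(A,B) \leq (C,D)$ does not imply $\abs{C \cap D} \leq \abs{A \cap B}$ in general.

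The paper's proof brings the profile hypothesis in much earlier. Starting from any part $P_t$ with $A \cap B \subseteq P_t$ where the induced separation is improper (say $(B \sm A) \cap P_t = \emptyset$ and $(A,B) \in Q_1$), it applies Lemma~\ref{profiles_are_havens} to obtain a component $K \subseteq B \sm A$ with ${(V(G) \sm K, K \cup N(K)) \in Q_1}$; efficiency of $(A,B)$ forces $N(K) = A \cap B$. This singles out a neighbour of $t$ on the $K$-side and an edge separation $(C_t,D_t) \in N$, and one iterates, walking along $T$ to a part $P_x$ (still containing $A \cap B$, still with $(B \sm A) \cap P_x = \emptyset$) for which the resulting $(C_x,D_x)$ is $\leq$-maximal. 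Now the hypothesis that $N$ does \emph{not} distinguish $Q_1,Q_2$ efficiently is used: it forces $C_x \cap D_x$ to contain a vertex outside $A \cap B$, which must then lie in $A \sm B$ and hence also in the next part $P_y$. Improperness at $P_y$ would give $(B \sm A) \cap P_y = \emptyset$ and hence $(C_y,D_y) > (C_x,D_x)$, contradicting maximality; so the induced separation is proper at $P_y$. Thus the profile (via the haven lemma) and the non-efficiency hypothesis together drive a walk in $T$, rather than a one-shot comparison of $(A,B)$ with a single tree edge.
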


\begin{proof}
    Since $(A,B)$ is nested with $N$, there is a part $P_t$ such that ${A \cap B \subseteq P_t}$.
    Suppose that $(A \cap P_t, B \cap P_t)$ is not proper.
    Without loss of generality let $(B \sm A) \cap P_t$ be empty and let ${(A,B) \in Q_1}$.
    
    By Lemma~\ref{profiles_are_havens} we obtain a component $K$ of $G - (A \cap B)$ such that ${(A,B) \leq (V(G) \sm K, K \cup N(K)) \in Q_1}$.
    By consistency of $Q_2$ the separation ${(V(G) \sm K, K \cup N(K))}$ still distinguishes $Q_1$ and $Q_2$, and since $(A,B)$ distinguishes $Q_1$ and $Q_2$ efficiently, the neighbourhood of $K$ is ${A \cap B}$.
    Let $u$ be the neighbour of $t$ such that the by $t u$ induced separation $(C_t, D_t) \in N$ satisfies ${K \cup N(K) \subseteq D_t}$.
    If $(B \sm A) \cap P_u$ is empty, we obtain ${(C_u, D_u) \in Q_1}$ as before and by construction we obtain ${(C_t, D_t) < (C_u, D_u)}$.
    
    Among all parts $P_t$ containing $A \cap B$ such that $(B \sm A) \cap P_t$ is empty, we choose a part $P_x$ such that $(C_x, D_x)$ is maximal with respect to the ordering of separations.
    Let $y$ denote the neighbour of $x$ such that $xy$ induces $(C_x, D_x)$.
    There is a vertex ${v \in (C_x \cap D_x) \sm (A \cap B)}$, since otherwise $(C_x, D_x)$ would distinguish $Q_1$ and $Q_2$ efficiently.
    Since we assumed that $(B \sm A) \cap P_x$ is empty, we deduce that $v \in A \sm B$.
    Therefore $(A \sm B) \cap P_y$ is not empty.
    Hence if $(A \cap P_y, B \cap P_y)$ on $H_y$ were improper, then ${(B \sm A) \cap P_y}$ would be empty and $(C_y, D_y)$ would contradict the maximality of $(C_x,D_x)$.
\end{proof}

For a nested separation system $N$ let $S_{<k}^N$ be the set of separations of order less than $k$ of $G$ nested with $N$.

\begin{construction}\label{constr:refine_N} 
    Let $N \subseteq S_{<{r+1}}$ be a nested separation system of~$G$ and let $\Pcal$ be a set $r$-robust $\ell$-profiles of~$G$ for some values $\ell \leq r+1$,
    such that $S_{< r+1}^N$ distinguishes any two distinguishable profiles in $\Pcal$ efficiently.
    
    Let $\Tcal(N) = \big( T, (P_t)_{t \in V(T)} \big)$ be as in Theorem~\ref{tree_from_N} 
    and let $\Pcal^t$ be the set of profiles $\widetilde{Q}^t$ of $H_t$ for $Q \in \Pcal$. 
    Applying Theorem~\ref{canon_td_profiles} to the graphs $H_t$ and the maximal $k$ of any $k$-profile in $\Pcal^t$, we get a tree-decomposition $\Tcal^t$ of~$H_t$ that distinguishes every two distinguishable profiles in $\Pcal^t$ efficiently. 
    Note that if $\Pcal$ is canonical, then the family $(\Tcal^t)_{t\in V(T)}$ is canonical as well.
    By applying Lemma~\ref{gluing_lemma} we obtain a tree-decomposition~$\overline{\Tcal}$ and the corresponding nested system $\overline{N}$ of separations of order at most~$r$ induced by~$\overline{\Tcal}$.
\end{construction}

\begin{theorem}\label{refine_N}
    The nested separation system $\overline{N}$ as in Construction~\ref{constr:refine_N} satisfies the following.
    \begin{itemize}
        \item[(i)] $N \subseteq \overline{N}$;
        \item[(ii)] $\overline{N}$ distinguishes every two distinguishable profiles in $\Pcal$ efficiently;
        \item[(iii)] if $N$ and $\Pcal$ are canonical, then so is $\overline{N}$.
    \end{itemize}
\end{theorem}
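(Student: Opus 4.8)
The plan is to treat the three items in turn, drawing on the properties of the gluing operation collected in Lemma~\ref{gluing_lemma} and on the fact, recorded in Remark~\ref{profile_on_torso}, that the passage $Q \mapsto \widetilde{Q}^t$ of Construction~\ref{constr:profile_on_torso} respects efficient distinguishing. For (i): by Theorem~\ref{tree_from_N}(iii) the separations induced by $\Tcal(N)$ are exactly those in $N$, and since $\overline{\Tcal}$ is built from $\Tcal(N)$ and the family $(\Tcal^t)$ via Construction~\ref{constr:T_bar}, Lemma~\ref{gluing_lemma}(iv) gives that each separation induced by $\Tcal(N)$ is also induced by $\overline{\Tcal}$, i.e.\ $N \subseteq \overline{N}$. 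For (iii): if $N$ is canonical then so is $\Tcal(N)$ (Theorem~\ref{tree_from_N} and its footnote); if moreover $\Pcal$ is canonical, then as observed in Construction~\ref{constr:refine_N} the family $(\Tcal^t)$ is canonical as well — this uses that Construction~\ref{constr:profile_on_torso} commutes with graph isomorphisms, so that any automorphism of $G$ witnessing the similarity of two torsos $H_t$ and $H_u$ maps $\Pcal^t$ onto $\Pcal^u$, together with the canonicity of the tree-decompositions supplied by Theorem~\ref{canon_td_profiles}. Then Lemma~\ref{gluing_lemma}(vi) shows $\overline{\Tcal}$ is canonical, hence so is $\overline{N} = N(\overline{\Tcal})$.

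For (ii), which is the substantive point, let $Q_1, Q_2 \in \Pcal$ be distinguishable. If $N$ already distinguishes them efficiently, then so does $\overline{N} \supseteq N$ by (i); so assume it does not. By hypothesis there is a separation $(A,B) \in S_{<r+1}^N$ distinguishing $Q_1$ and $Q_2$ efficiently; set $k_0 := \abs{A \cap B}$. We are then exactly in the situation of Lemma~\ref{refine_claim1}, which yields a part $P_t$ of $\Tcal(N)$ on which $(A,B)$ induces a proper separation of $H_t$; since $(A\cap P_t,B\cap P_t)$ is a separation of the torso we have $A\cap B \subseteq P_t$, so it has order $k_0$. By Remark~\ref{profile_on_torso}(i) this separation distinguishes $\widetilde{Q}_1^t$ and $\widetilde{Q}_2^t$ efficiently in $H_t$, so $k_0$ is the least order of a separation of $H_t$ distinguishing these two distinct, distinguishable profiles of $\Pcal^t$. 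By Construction~\ref{constr:refine_N}, $\Tcal^t$ distinguishes them efficiently, hence by Remark~\ref{gluing_aux_td}(iv) so does $\widehat{\Tcal}^t$; thus some separation $(C,D)$ induced by $\widehat{\Tcal}^t$ has order $k_0$ and distinguishes $\widetilde{Q}_1^t$ and $\widetilde{Q}_2^t$. Applying Lemma~\ref{gluing_lemma}(v) produces a separation $(E,F)$ induced by $\overline{\Tcal}$ with $E \cap F \subseteq P_t$ and $(E \cap P_t, F \cap P_t) = (C,D)$; then $(E,F) \in \overline{N}$, and since $E \cap F = C \cap D$ it has order $k_0$. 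Finally $(E,F)$ induces $(C,D)$ on $H_t$ (as $(C,D)$ separates no adhesion set of $\Tcal(N)$ at $P_t$, neither does $(E,F)$), so by Remark~\ref{profile_on_torso}(ii) it distinguishes $Q_1$ and $Q_2$; as its order equals the efficient value $k_0$, it does so efficiently, which proves (ii).

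The main obstacle is (ii), and its heart is Lemma~\ref{refine_claim1}, which locates a torso on which an efficient nested distinguisher not yet contained in $N$ shows up as a proper separation; everything after it is bookkeeping — carrying the efficient order into the torso via Remark~\ref{profile_on_torso}(i), refining inside the torso (Theorem~\ref{canon_td_profiles}, already packaged into Construction~\ref{constr:refine_N}), and lifting the result back to $G$ through the gluing via Lemma~\ref{gluing_lemma}(v) and Remark~\ref{profile_on_torso}(ii). One minor point to check when invoking Construction~\ref{constr:refine_N} is that each torso profile $\widetilde{Q}^t$ really lies within the scope of Theorem~\ref{canon_td_profiles} for the parameter used on $H_t$: this holds because $\widetilde{Q}^t$ is $r$-robust (Remark~\ref{profile_on_torso1}) and of order at most $r+1$, so the parameter $k^t$ — the largest order of a profile in $\Pcal^t$ — satisfies $k^t \le r+1$, whence $\widetilde{Q}^t$ is $(k^t-1)$-robust.
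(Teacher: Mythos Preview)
Your proof is correct and follows exactly the same route as the paper's: (i) via Lemma~\ref{gluing_lemma}(iv), (iii) via the canonicity noted in Construction~\ref{constr:refine_N} together with Lemma~\ref{gluing_lemma}(vi), and (ii) via the chain Lemma~\ref{refine_claim1} $\to$ Remark~\ref{profile_on_torso}(i) $\to$ Theorem~\ref{canon_td_profiles} $\to$ Remark~\ref{gluing_aux_td}(iv) $\to$ Lemma~\ref{gluing_lemma}(v) $\to$ Remark~\ref{profile_on_torso}(ii). One tiny quibble: the inclusion $A\cap B\subseteq P_t$ does hold for the part supplied by Lemma~\ref{refine_claim1}, but not merely ``because $(A\cap P_t,B\cap P_t)$ is a separation of the torso''; it is implicit in the paper's use of the word \emph{induced} (which by definition requires $A\cap B\subseteq P_t$), or alternatively follows a posteriori since the lifted separation has order $|C\cap D|\le |A\cap B|$ yet distinguishes $Q_1,Q_2$.
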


\begin{proof}
    Lemma~\ref{gluing_lemma}\,(iv) yields (i).
    For (ii), consider two distinguishable profiles ${Q_1, Q_2 \in \Pcal}$ not already distinguished efficiently by $N$.
    By assumption, there is some ${(A,B) \in S_{< r+1}^N}$ distinguishing $Q_1$ and $Q_2$ efficiently.
    
    By Lemma~\ref{refine_claim1} and Remark~\ref{profile_on_torso}\,(i) there is a part $P_t$ of $\Tcal(N)$ such that $\widetilde{Q}_1^t$ and $\widetilde{Q}_2^t$ are distinguished efficiently by ${(A \cap P_t, B \cap P_t)}$.
    Hence Theorem~\ref{canon_td_profiles}, Remark~\ref{gluing_aux_td}\,(iv), Lemma~\ref{gluing_lemma}\,(v) and Remark~\ref{profile_on_torso}\,(ii) yield a separation of order $\lvert A \cap B \rvert$ in~$\overline{N}$ distinguishing $Q_1$ and $Q_2$, yielding (ii).
    
    Finally, (iii) holds by construction.
\end{proof}

\section{Proof of the main result}\label{section:proof}

Given a $k$-block~$b$ and a component $C$ of $G-b$, then $(C\cup N(C), V(G)\sm C)$ is a separation.
By $S_k(b)$ we denote the set of all those separations. 
Note that $S_k(b)$ is a nested set of separations, while for different ($r$-robust) $k$-blocks $b$, $b'$ the union $S_k(b) \cup S_k(b')$ need not to be nested \cite{canon2}.

\begin{lemma}\label{separability}
    Let $b$ be a $k$-block of $G$.
Then $b$ is separable if and only if every separation in $S_k(b)$ has order less than $k$.
\end{lemma}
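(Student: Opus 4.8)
The plan is to prove both directions of the equivalence, with the forward direction being essentially a definitional unwinding and the backward direction requiring a construction.

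\textbf{The easy direction.} Suppose $b$ is separable, so there is a tree-decomposition $\big(T,(P_t)_{t\in V(T)}\big)$ of adhesion less than $k$ in which $b = P_{t_0}$ for some node $t_0$. Take any component $C$ of $G-b$ and consider the separation $(C\cup N(C), V(G)\sm C) \in S_k(b)$. Since $C$ is connected and disjoint from $P_{t_0}$, the subtree of nodes $t$ with $C\cap P_t\ne\emptyset$ is connected and avoids $t_0$; let $u$ be the neighbour of $t_0$ on the path from $t_0$ into this subtree, and let $\vec e = t_0 u$ induce the separation $(A,B)$ of $G$, so $A\cap B = P_{t_0}\cap P_u$ has size less than $k$. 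One checks $C\subseteq B\sm A$ (all of $C$ lies on the $u$-side by connectedness and (T3)), hence $N(C)\subseteq A\cap B \cup (B\sm A)$, and in fact $C\cup N(C)\subseteq B$ while $V(G)\sm C \supseteq A$; combined with the fact that the separator of $(C\cup N(C),V(G)\sm C)$ is exactly $N(C)\subseteq A\cap B$, we conclude the order of $(C\cup N(C),V(G)\sm C)$ is at most $|A\cap B| < k$. So every separation in $S_k(b)$ has order less than $k$.

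\textbf{The harder direction.} Conversely, suppose every separation in $S_k(b)$ has order less than $k$. The natural candidate is $N \defeq S_k(b)$ together with its inverses: this is a nested separation system (as noted in the excerpt just before the lemma), all of whose separations have order less than $k$. I would apply Theorem~\ref{tree_from_N} to obtain the tree-decomposition $\Tcal(S_k(b))$, and then argue that $b$ itself is an $S_k(b)$-block, so by part (i) of that theorem $b$ appears as a part. To see $b$ is $S_k(b)$-inseparable: no separation $(C\cup N(C),V(G)\sm C)$ separates $b$, because $b$ is entirely contained in $V(G)\sm C$ (as $C$ is a component of $G-b$, so $C\cap b=\emptyset$), so $b$ meets only one side. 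For maximality, suppose $X\supsetneq b$ is $S_k(b)$-inseparable; pick $v\in X\sm b$. Since $b$ is a \emph{maximal} $(<k)$-inseparable set, there is a separation $(A,B)\in S_{<k}$ separating $b\cup\{v\}$, which (since it does not separate $b$) means $b$ lies in one side, say $b\se A\sm B$ is false — rather $b\se B$ with $B\sm A$ meeting $b$ — and $v$ lies strictly on the other side $A\sm B$. Using Lemma~\ref{profiles_are_havens} applied to the profile $P_k(b)$ and the separator $A\cap B$, one gets a component $C$ of $G-(A\cap B)$ with $(V(G)\sm C, C\cup N(C))\in P_k(b)$ and $N(C)=A\cap B$ (after pushing to the neighbourhood); but then $C$ is a component of $G-b$ as well (since $b\se V(G)\sm C$ and $N(C)\se A\cap B$ is disjoint from... one needs $b\cap C=\emptyset$ which holds, and that $C$ is a full component of $G-b$), giving a separation in $S_k(b)$ that separates $v$ from $b$, contradicting $S_k(b)$-inseparability of $X$. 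Hence $b$ is an $S_k(b)$-block, and $\Tcal(S_k(b))$ witnesses separability.

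\textbf{Anticipated obstacle.} The delicate point is the maximality argument: matching up a separator $A\cap B$ witnessing that $b$ is not extendable inside $S_{<k}$ with an actual element of $S_k(b)$, i.e. showing the relevant component $C$ of $G-(A\cap B)$ is genuinely a component of $G-b$ and that its neighbourhood is small enough. This is exactly where the hypothesis ``every separation in $S_k(b)$ has order less than $k$'' must be used — without it, the candidate separation through $N(C)$ might have order $\ge k$ and thus not lie in $S_{<k}$, so it could not serve to separate $v$ from $b$. I expect the cleanest route is to invoke Lemma~\ref{profiles_are_havens} to replace $A\cap B$ by $N(C)$ with $|N(C)|\le|A\cap B|<k$, then observe $(C\cup N(C),V(G)\sm C)\in S_k(b)$ has order $|N(C)|<k$ automatically, and that it separates $v$ from $b$; so the hypothesis is actually only needed for the ``$\Rightarrow$'' reading in the contrapositive, or alternatively to guarantee $\Tcal(S_k(b))$ has adhesion less than $k$ in the ``$\Leftarrow$'' direction. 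I would double-check which of the two directions genuinely consumes the hypothesis and phrase the proof accordingly.
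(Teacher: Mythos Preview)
Your proposal is correct in both directions, and the `only if' direction matches the paper's argument (just more verbosely).

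For the `if' direction you take a genuinely different route from the paper. The paper simply writes down the star-decomposition: the tree is a star with centre $t_0$ and one leaf $t_C$ per component $C$ of $G-b$, with parts $P_{t_0}=b$ and $P_{t_C}=C\cup N(C)$; the adhesion sets are exactly the $N(C)$, so the hypothesis gives adhesion less than $k$ in one line. Your route via Theorem~\ref{tree_from_N} works, but it forces you into the maximality argument you yourself flag as delicate. That argument can in fact be done in one line without Lemma~\ref{profiles_are_havens}: any $v\notin b$ lies in some component $C$ of $G-b$, and since $|N(C)|<k\le|b|$ there is a vertex of $b$ outside $C\cup N(C)$, so $(C\cup N(C),V(G)\sm C)\in S_k(b)$ already separates $b\cup\{v\}$. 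Your closing intuition, that the hypothesis is really only buying ``adhesion less than $k$'', is exactly right, and the star-decomposition makes this transparent.
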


\begin{proof}
    For the `only if'-implication, let $\Tcal = \big(T, (P_t)_{t \in V(T)}\big)$ be a tree-decom{-}position of adhesion less than $k$ of $G$ with $P_t = b$ for some $t \in V(T)$. 
    Let $C$ be a component of $G-b$.
    There is a separation $(A,B)$ induced by $\Tcal$ with $C\se A\sm B$ and $b\se B$. 
    Hence $N(C)\se A\cap B$, and so has less than $k$ vertices. 

    For the `if'-implication, just consider the star-decomposition induced by $S_k(b)$, whose central part is $b$.
    This tree-decomposition has adhesion less than $k$ if and only if all separations in $S_k(b)$ have order less than $k$.
\end{proof}

\begin{remark}\label{inseparable_separator}
    Let $b$ be a $k$-block of $G$.
    For all $(A,B) \in S_k(b)$ the separator $A \cap B$ is a subset of $b$.
    \qed 
\end{remark}


Given some $r \in \mathbb{N}$ and a set $\Bcal$ of distinguishable\footnote{A set of blocks is \emph{distinguishable} if the set of induced profiles is distinguishable.} $r$-robust $k$-blocks for some values $k \leq r+1$, we define
\[
    S(\Bcal) \defeq \bigcup \big\{ S_k(b) \cap S_{<k}\ \big|\ b \text{ is a } k \text{-block in } \Bcal \big\}.
\]
Note that if the set of profiles induced by $\Bcal$ is canonical, then so is $S(\Bcal)$.

\begin{lemma}\label{S_blocks}
    Every separable $k$-block $b \in \Bcal$ is an $S(\Bcal)$-block. 
\end{lemma}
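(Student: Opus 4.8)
The plan is to show that a separable $k$-block $b \in \Bcal$ is both $S(\Bcal)$-inseparable and maximal with this property. First I would verify inseparability: suppose some $(A,B) \in S(\Bcal)$ separated $b$, so that $b$ meets both $A \sm B$ and $B \sm A$. Such a separation comes from $S_{k'}(b') \cap S_{<k'}$ for some $k'$-block $b' \in \Bcal$, hence $A \cap B$ has fewer than $k' \le r+1$ vertices. But $|A\cap B| < k'$; I would like to push this down to $|A \cap B| < k$. This needs care when $k' > k$, and handling it is where the induced-profile/robustness hypotheses on $\Bcal$ enter: since $b$ and $b'$ are distinguishable, the separation $(A,B)$ — or rather a separation arising from it — should in fact witness a separator of size $< k$ around $b$, contradicting that $b$ is a $k$-block (no separator of size $<k$ splits $b$). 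Concretely, because $(A,B) \in S_{k'}(b')$ with $b' \subseteq B$, and $b$ meets $A \sm B$, some component $C$ of $G - b$ with $N(C) \subseteq A \cap B$ has vertices on the $A$-side, and one uses that separable $k$-blocks have all their separators $A \cap B$ of size $< k$ lying inside $b$ (Remark~\ref{inseparable_separator}, Lemma~\ref{separability}) to force a size-$(<k)$ separator of $b$. I expect this comparison between the two thresholds $k$ and $k'$ — showing a putative $S(\Bcal)$-separation of $b$ yields a genuine $(<k)$-separation of $b$ — to be the main obstacle.

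Second I would verify maximality. Let $X \supsetneq b$ be $S(\Bcal)$-inseparable; I must show no such $X$ exists, i.e. $b$ is already maximal. Pick $v \in X \sm b$. Since $b$ is a $k$-block and $v \notin b$, maximality of $b$ as a $(<k)$-inseparable set gives a separation of order $< k$ separating $b \cup \{v\}$ — more usefully, $v$ lies in some component $C$ of $G - b$, and the separation $(C \cup N(C), V(G) \sm C) \in S_k(b)$ has order $|N(C)| < k$ by Lemma~\ref{separability} (here separability of $b$ is essential). This separation lies in $S_k(b) \cap S_{<k} \subseteq S(\Bcal)$. Since $b \subseteq V(G)\sm C$ and $v \in C$, if additionally $b \sm N(C) \neq \emptyset$ and $C \sm N(C) \neq \emptyset$ then this separation separates $X \supseteq b \cup \{v\}$, contradicting $S(\Bcal)$-inseparability of $X$. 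The degenerate cases ($C = \{v\}$ with $N(C) = C \cup N(C) \sm C$, etc.) need a brief check but are routine: one uses $|b| \ge k > |N(C)|$ to find a vertex of $b$ outside $N(C)$.

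Combining, $b$ is $S(\Bcal)$-inseparable and maximal such, hence an $S(\Bcal)$-block, which is the assertion of the lemma. The cleanest write-up probably separates out the two halves as short paragraphs, invoking Remark~\ref{inseparable_separator} and Lemma~\ref{separability} for inseparability and Lemma~\ref{separability} again for maximality; the only real content is the threshold comparison in the first half.
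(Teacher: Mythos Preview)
Your maximality argument (the second half) is correct and essentially identical to the paper's: for $v \notin b$, the separation $(C \cup N(C), V(G)\sm C) \in S_k(b)$ with $v \in C$ has order $|N(C)| < k$ by Lemma~\ref{separability}, hence lies in $S(\Bcal)$ and separates $v$ from $b$. The degenerate cases you worry about do not arise, since $|b| \ge k > |N(C)|$ always furnishes a vertex of $b$ outside $N(C)$, while $v \in C$ lies outside $V(G)\sm C$.

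The inseparability argument (the first half), however, has a genuine gap. You correctly identify the obstacle --- when $(A,B) \in S_{k'}(b')$ with $k' > k$ separates $b$, one cannot simply conclude $|A \cap B| < k$ --- but your proposed resolution does not work. There is no reason a component of $G - b$ should have its neighbourhood contained in $A \cap B$, and Remark~\ref{inseparable_separator} only gives $A \cap B \subseteq b'$, which says nothing about~$b$. The paper's argument is different and more structural: take a separation $(C,D)$ distinguishing the induced profiles of $b$ and $b'$ \emph{efficiently}, with $b \subseteq C$ and $b' \subseteq D$; distinguishability forces $|C \cap D| < k$. Since $A \cap B \subseteq b' \subseteq D$, the link $\ell_C$ is empty, so both corner separations $(A \cap C, B \cup D)$ and $(B \cap C, A \cup D)$ have order at most $|C \cap D| < k$. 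A vertex $v \in b \sm (C \cap D)$ then lies in $A \sm B$ or $B \sm A$, and the corresponding corner separation separates $v$ from a vertex of $b$ on the other side of $(A,B)$, contradicting the $(<k)$-inseparability of $b$. The ingredient missing from your sketch is precisely this corner-separation trick via an efficient distinguisher; without it the threshold comparison you flag cannot be closed.
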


\begin{proof}
    Suppose for a contradiction there is a $k'$-block $b' \in \Bcal$ and a separation ${(A,B) \in S_{k'}(b') \cap S_{<k'} \subseteq S(\Bcal)}$ separating $b$.
    Consider a separation $(C,D)$ distinguishing $b$ and $b'$ efficiently with $b \subseteq C$ and $b' \subseteq D$.
    Since ${\lvert C \cap D \rvert < k}$, there is a vertex ${v \in b \sm (C \cap D)}$.
    And since $(A \cap B) \subseteq b' \subseteq D$, the link $\ell_C$ is empty.
    Therefore we deduce that either $v \in A \sm B$ or $v \in B \sm A$.
    Let $w$ denote a vertex of $b$ such that $(A,B)$ separates $v$ and $w$.
    Both the corner separations $(A \cap C, B \cup D)$ and $(B \cap C, A \cup D)$ have order at most $\lvert C \cap D \rvert < k$.
    But one of them separates $v$ from $w$, contradicting the $(< k)$-inseparability of $b$.
    Hence $b$ is $S(\Bcal)$-inseparable.
    
    Let $X$ be an $S(\Bcal)$-inseparable set including $b$ and let $v \in V(G) \sm b$. 
    Then there is some $(A,B) \in S_k(b)$ separating $b$ from $v$.
    Lemma~\ref{separability} implies that ${(A,B) \in S_k(b) \cap S_{<k} \subseteq S(\Bcal)}$ and thus $v$ is not in~$X$.
    Hence $X=b$. 
\end{proof}

\begin{lemma}\label{almost_claim1}
    Let $(A,B)$ and $(C,D)$ be tight separations of $G$ such that $A\sm B$ is connected and the link $\ell_A$ is empty. Then $(A,B)$ and $(C,D)$ are nested.
\end{lemma}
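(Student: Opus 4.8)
The plan is to bypass the corner/interior machinery of Remark~\ref{corner_lemma} and instead argue directly with the set $A\sm B$, showing that one of $(A,B)\leq(C,D)$ or $(A,B)\leq(D,C)$ holds. First I would unravel the hypothesis: in the cross-diagram of $(A,B)$ and $(C,D)$ the link $\ell_A$ equals $(A\sm B)\cap C\cap D$ (this is the same set for the corners $\corner(A,C)$ and $\corner(A,D)$, since $c = A\cap B\cap C\cap D$), so $\ell_A=\emptyset$ says precisely that $A\sm B$ avoids the separator $C\cap D$. Since $A\sm B$ is connected it is in particular non-empty; for completeness one can also note that if $A\sm B$ were empty then tightness of $(A,B)$ would force $A\cap B=\emptyset$, whence $(A,B)=(\emptyset,V(G))\leq(C,D)$ and the claim is trivial.

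Next I would split $A\sm B$ along $(C,D)$. As $C\cup D=V(G)$ and $(A\sm B)\cap C\cap D=\emptyset$, the set $A\sm B$ is the disjoint union of $(A\sm B)\cap(C\sm D)$ and $(A\sm B)\cap(D\sm C)$. Because $(C,D)$ is a separation there is no edge of $G$ between $C\sm D$ and $D\sm C$, hence none between these two sets; as $A\sm B$ is connected and non-empty, one of them is empty. Without loss of generality $A\sm B\se C\sm D$ (the other case is obtained by interchanging $C$ and $D$).

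Then I would promote the containment $A\sm B\se C$ to $A\se C$ using tightness of $(A,B)$: for any $v\in A\cap B$ tightness gives a neighbour $u$ of $v$ in $A\sm B\se C\sm D$, and since $uv\in E(G)$ and there is no edge from $C\sm D$ to $D\sm C$, we get $v\notin D\sm C$, i.e.\ $v\in C$; thus $A\cap B\se C$ and $A=(A\sm B)\cup(A\cap B)\se C$. For the reverse inclusion, note $V(G)\sm B\se A\sm B\se C\sm D$, so $(V(G)\sm B)\cap D=\emptyset$, that is $D\se B$. Together $A\se C$ and $D\se B$ give $(A,B)\leq(C,D)$, so $(A,B)$ and $(C,D)$ are nested (in the symmetric subcase one obtains $(A,B)\leq(D,C)$ instead).

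The only real subtlety is recognising that tightness of $(A,B)$ is exactly the hinge that upgrades ``$A\sm B$ lies on one side of $(C,D)$'' to ``all of $A$ lies on that side''; everything else is routine bookkeeping with the separation axioms. It is worth remarking that tightness of $(C,D)$ is not actually used in this argument.
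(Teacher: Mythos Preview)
Your argument is correct. The underlying idea is the same as the paper's: connectedness of $A\sm B$ together with $\ell_A=\emptyset$ forces $A\sm B$ entirely to one side of $(C,D)$, and then tightness of $(A,B)$ pulls the separator $A\cap B$ to that same side. The only difference is cosmetic: the paper phrases the second step as ``$\ell_C$ is empty'' and then invokes Remark~\ref{corner_lemma} to conclude nestedness, whereas you unfold that remark and verify $(A,B)\le(C,D)$ directly. Your closing observation that tightness of $(C,D)$ is unused is accurate and applies equally to the paper's proof.
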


\begin{proof}
    Since $A \sm B$ is connected, either $\interior(A,C)$ or $\interior(A,D)$ is empty, say $\interior(A,C)$. 
    Thus there cannot be a vertex in the link $\ell_C$ because it would have a neighbour in $A\sm B$, which is impossible.
    Hence $(A,B)$ and $(C,D)$ are nested by Remark~\ref{corner_lemma}.
\end{proof}

\begin{lemma}\label{almost_claim2}
    Let $(A,B),(C,D)\in S(\Bcal)$ be crossing.
    Then the links $\ell_B$ and $\ell_D$ are empty.
    
    Moreover, the separation ${(K\cup N(K), V(G)\sm K)}$ for every component $K$ of $G[\interior(B, D)]$ is in $S(\Bcal)$ and its order is strictly less than the orders of both $(A,B)$ and $(C,D)$.
\end{lemma}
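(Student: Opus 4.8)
The plan is to write $(A,B)=(C_1\cup N(C_1),V(G)\sm C_1)\in S_{k_1}(b_1)\cap S_{<k_1}$ and $(C,D)=(C_2\cup N(C_2),V(G)\sm C_2)\in S_{k_2}(b_2)\cap S_{<k_2}$ for components $C_i$ of $G-b_i$, so that $A\sm B=C_1$ and $C\sm D=C_2$ are connected and $A\cap B=N(C_1)$, $C\cap D=N(C_2)$ lie in $b_1$, $b_2$ respectively by Remark~\ref{inseparable_separator}. First I would record that these separations are tight: a vertex $v$ of $N(C_i)$ has a neighbour in $C_i$ by definition, and if it had no neighbour in $V(G)\sm(C_i\cup N(C_i))$ then $C_i\cup\{v\}$ would be a union of components of $G-(N(C_i)\sm\{v\})$, so the fewer than $k_i-1$ vertices $N(C_i)\sm\{v\}$ would separate $v$ from some other vertex of $b_i$ (one exists since $\abs{b_i}\ge k_i>\abs{N(C_i)}$), contradicting $(<k_i)$-inseparability of $b_i$. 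Applying Lemma~\ref{almost_claim1} once with $(A,B)$ and once with $(C,D)$ in the first role, a crossing pair must have $\ell_A=C_1\cap N(C_2)$ and $\ell_C=C_2\cap N(C_1)$ both non-empty (and $b_1\ne b_2$, as otherwise $\ell_A=\emptyset$). Writing $c=N(C_1)\cap N(C_2)$ for the centre, a vertex of $\ell_A$ lies in $N(C_2)\sm(A\cap B)$ and a vertex of $\ell_C$ lies in $N(C_1)\sm(C\cap D)$, so $\abs c$ is strictly smaller than the order $\abs{N(C_1)}$ of $(A,B)$ and than the order $\abs{N(C_2)}$ of $(C,D)$.

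For the emptiness of the links I would argue as follows. If $\ell_B\ne\emptyset$ then, since $N(C_2)=\ell_A\sqcup c\sqcup\ell_B$ with $\ell_A\se A\sm B$ and $\ell_B\se B\sm A$, the separation $(A,B)$ separates two vertices of $N(C_2)\se b_2$, so its order is at least $k_2$, forcing $k_2<k_1$; symmetrically $\ell_D\ne\emptyset$ forces $k_1<k_2$, so $\ell_B$ and $\ell_D$ cannot both be non-empty. Since relabelling the two separations swaps $\ell_B$ with $\ell_D$, I may assume $k_1\le k_2$; then the order of $(A,B)$ is less than $k_1\le k_2$, so $(A,B)$ cannot separate $b_2$, and as $\ell_A$ witnesses $b_2\cap(A\sm B)\ne\emptyset$ we get $b_2\se A$, hence $N(C_2)\se b_2\se A$ and $\ell_B=N(C_2)\sm A=\emptyset$. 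The symmetric statement gives $\ell_D=\emptyset$ as soon as the order of $(C,D)$ is less than $k_1$. The one remaining case is $k_1\le\text{ord}(C,D)<k_2$: here $b_2\se A=C_1\cup N(C_1)$, and I expect to rule out $\ell_D\ne\emptyset$ by a finer analysis — if $v''\in\ell_D$ then $(C,D)$ separates $v''$ from $w\in N(C_1)\cap C_2$, both in $b_1$, and one plays the efficient separation distinguishing $b_1$ from $b_2$ against the corner separations of $(A,B)$ and $(C,D)$, using submodularity (Remark~\ref{submodularity}) together with the $(<k_1)$- and $(<k_2)$-inseparability of $b_1$ and $b_2$, probably after a short subcase split on how $b_2$ meets $C_1$. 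This is the step I expect to be the main obstacle.

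With $\ell_B=\ell_D=\emptyset$ in hand the rest is routine. The corner separation $\corner(B,D)=(B\cap D,A\cup C)$ then has separator exactly $c$, so $\interior(B,D)=(B\cap D)\sm c$ is a union of components of $G-c$; in particular any component $K$ of $G[\interior(B,D)]$ is a component of $G-c$, whence $N(K)\se c$. Moreover $\interior(B,D)$ is disjoint from at least one of $b_1,b_2$: a vertex $x\in\interior(B,D)\cap b_1$ lies in $D\sm C$ while $w\in N(C_1)\cap C_2\se b_1\cap(C\sm D)$, so $(C,D)$ would separate $b_1$ and have order $\ge k_1$; symmetrically $\interior(B,D)\cap b_2\ne\emptyset$ would force $\text{ord}(A,B)\ge k_2$; these two inequalities together contradict $\text{ord}(A,B)<k_1$ and $\text{ord}(C,D)<k_2$. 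Say $\interior(B,D)\cap b_2=\emptyset$ (the other case is symmetric with $b_1$ in place of $b_2$). Then $K$ is connected, disjoint from $b_2$, and $N(K)\se c\se N(C_2)\se b_2$, so $K$ is a component of $G-b_2$ and $(K\cup N(K),V(G)\sm K)\in S_{k_2}(b_2)$; since $\abs{N(K)}\le\abs c$ is less than both $\text{ord}(A,B)$ and $\text{ord}(C,D)$, and in particular less than $k_2$, this separation lies in $S_{k_2}(b_2)\cap S_{<k_2}\se S(\Bcal)$ and has the required order, which finishes the proof.

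In summary, the skeleton is: pass to the canonical component form and tightness; use Lemma~\ref{almost_claim1} to extract $\ell_A,\ell_C\ne\emptyset$ and the strict bound $\abs c<\min\{\text{ord}(A,B),\text{ord}(C,D)\}$; deduce $\ell_B=\ell_D=\emptyset$ from inseparability of the $b_i$ (the subtle case being unequal thresholds $k_1<k_2$ with $(C,D)$ large for $k_1$, which is the hard part); and finally identify $\interior(B,D)$ as a union of $G-c$-components and each such component as a component of $G-b_i$ for the block whose threshold is at least $\abs c+1$, yielding the desired separation in $S(\Bcal)$ of strictly smaller order.
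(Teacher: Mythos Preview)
Your setup (tightness, Lemma~\ref{almost_claim1} giving $\ell_A,\ell_C\neq\emptyset$, hence $|c|<\min\{|A\cap B|,|C\cap D|\}$) and your proof that $\ell_B=\emptyset$ under the convention $k_1\le k_2$ are fine and match the paper's reasoning. The ``Moreover'' part is also correct, though the paper gets the slightly stronger conclusion that $K$ is a component of \emph{both} $G-b_1$ and $G-b_2$, since once $\ell_B=\ell_D=\emptyset$ one has $b_1\subseteq C$ and $b_2\subseteq A$, so $\interior(B,D)$ misses both blocks.

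The genuine gap is your ``remaining case'' $k_1\le\text{ord}(C,D)<k_2$ for $\ell_D$. Your plan---bringing in an efficient separation distinguishing $b_1$ and $b_2$ and playing it against corner separations via submodularity---is the wrong direction and is not needed. The paper's argument stays entirely inside $b_1$ and never looks at the order of $(C,D)$ relative to $k_1$. Translated to your convention: once $\ell_B=\emptyset$, pick $w\in b_1\sm(A\cap B)$ (this exists since $|b_1|\ge k_1>|A\cap B|$) and let $X$ be its component in $G-(A\cap B)$. Then $X\subseteq B\sm A$, and since $\ell_B=\emptyset$ the set $X$ avoids $C\cap D$, so by connectedness $X\subseteq\interior(B,C)$ or $X\subseteq\interior(B,D)$. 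If $X\subseteq\interior(B,D)$, the corner separation $(B\cap D,A\cup C)$ has separator $c\cup\ell_D$ of size $|c|+|\ell_D|<|A\cap B|<k_1$ (using $\ell_C\neq\emptyset$) and separates $w$ from $v_C\in\ell_C\subseteq b_1$, contradicting that $b_1$ is a $k_1$-block. Hence $X\subseteq\interior(B,C)$. Now if $u\in\ell_D$, the corner separation $(B\cap C,A\cup D)$ has separator $c\cup\ell_C$ of size $|c|+|\ell_C|<|A\cap B|<k_1$ (using $|\ell_D|\ge 1$) and separates $w$ from $u\in b_1$, again a contradiction. So $\ell_D=\emptyset$.

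The point you missed is that the two corner separations on the $B$-side both have order strictly less than $|A\cap B|<k_1$, so one can argue purely against the $k_1$-block $b_1$; the size of $|C\cap D|$ relative to $k_1$ is irrelevant.
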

    
\begin{proof}
    Let $b_1$ and $b_2$ be blocks in $\Bcal$ such that ${(A,B) \in S_{k_1}(b_1) \cap S_{< k_1}}$ and ${(C,D) \in S_{k_2}(b_2) \cap S_{< k_2}}$.
    We may assume that the order $k_2$ of $b_2$ is at most the order $k_1$ of $b_1$.
    By Lemma~\ref{almost_claim1}, there are vertices $v_A\in \ell_A$ and $v_C\in \ell_C$.
    By Remark~\ref{inseparable_separator}, $v_C\in b_1$. As  $(C,D)$ cannot separate $b_1$, 
    the block $b_1$ is contained in $B\cap C$.
    In particular, the link $\ell_D$ is empty. 
    
    Let $X$ be a component of $G-C\cap D$ that contains a vertex $w$ of $b_2$. Note that $X$ is unique as $b_2$ is a $k_2$-block.
    As $\ell_D$ is empty, $X$ must be contained in $D\cap A$ or $D\cap B$.
    Since $b_2$ contains $v_A$, it must be contained in $D\cap A$.
    Indeed, otherwise the corner separation of $B\cap D$ would separated $w$ from~$v_A$.
    Hence $\ell_B$ is empty. 
    
    Let $K$ be an arbitrary component of ${G[\interior(B,D)]}$.
    Let ${E \defeq K \cup N(K)}$ and ${F \defeq V(G) \sm K}$.
    Since the center $c$ is a subset of ${b_1 \cap b_2}$ and since ${K \cap (b_1 \cup b_2)}$ is empty, $K$ is a component of both $G - b_1$ and $G-b_2$.
    Hence $(E,F)$ is in both $S_{k_1}(b_1)$ and $S_{k_2}(b_2)$. 
    And since ${E \cap F \subseteq c}$ and since $\ell_A$ and $\ell_C$ are not empty, we deduce that ${\abs{E \cap F} < \min\{\abs{A \cap B},\abs{C \cap D}\}}$.
\end{proof}

\begin{lemma}\label{S_almost_nested}
    $S(\Bcal)$ is almost nested.
\end{lemma}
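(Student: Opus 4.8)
The plan is to take an arbitrary $S(\Bcal)$-focusing sequence $(\beta_0,\ldots,\beta_n)$ and verify the goodness condition (F$\ast$) for it directly, namely that the separation system $N_{\beta_n}$ generated by $\min_{\mathrm{ord}}\big(S(\Bcal)\restricted\beta_n\big)$ is nested with $S(\Bcal)\restricted\beta_n$. If $S(\Bcal)\restricted\beta_n$ is empty this is immediate, so I would assume it is non-empty and write $\ell$ for the rank of the sequence, the common order of the members of $N_{\beta_n}$. Since nestedness is symmetric, it suffices to show that every $(A,B)\in S(\Bcal)$ with $(A,B)\restricted\beta_n\in\min_{\mathrm{ord}}\big(S(\Bcal)\restricted\beta_n\big)$ is nested with every $(C,D)\in S(\Bcal)$ for which $(C,D)\restricted\beta_n$ is proper. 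So I would suppose for a contradiction that such $(A,B),(C,D)$ exist with $(A,B)\restricted\beta_n$ and $(C,D)\restricted\beta_n$ crossing in $G[\beta_n]$.

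First I would lift the crossing to $G$: since restriction to $\beta_n$ preserves the partial order on separations, $(A,B)$ and $(C,D)$ must themselves cross in $G$. Now I would bring in the two earlier lemmas. Lemma~\ref{focusing_claim1} --- whose proof uses only the nestedness in (F2) and the block condition (F3) at indices below $n$, hence applies to any focusing sequence --- gives $A\cap B\se\beta_n$, so that the separator of $(A,B)\restricted\beta_n$ is $A\cap B$ and therefore $\ell=\abs{A\cap B}$. Lemma~\ref{almost_claim2}, applied to the crossing pair $(A,B),(C,D)$, tells me that in their cross-diagram the links $\ell_B$ and $\ell_D$ are empty, and that for every component $K$ of $G[\interior(B,D)]$ the separation $(K\cup N(K),V(G)\sm K)$ lies in $S(\Bcal)$ and has order strictly less than $\abs{A\cap B}=\ell$.

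The crux is to extract from this a separation in $S(\Bcal)$ that restricts to a \emph{proper} separation of $G[\beta_n]$ of order below $\ell$, contradicting the minimality built into $\min_{\mathrm{ord}}$. Here I would pass back to $G[\beta_n]$: in the cross-diagram of the two restrictions, the corner $\corner(B\cap\beta_n,D\cap\beta_n)$ has interior $\interior(B,D)\cap\beta_n$ and links $\ell_B\cap\beta_n=\emptyset$ and $\ell_D\cap\beta_n=\emptyset$; since the restrictions cross, Remark~\ref{corner_lemma} prevents all three of these from being empty, so $\interior(B,D)\cap\beta_n\ne\emptyset$, and I may fix a component $K$ of $G[\interior(B,D)]$ meeting $\beta_n$. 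Putting $E\defeq K\cup N(K)$ and $F\defeq V(G)\sm K$, we have $(E,F)\in S(\Bcal)$ with $\abs{E\cap F}<\ell$; and because $K\se\interior(B,D)\se B\sm A$ we get $N(K)\se B$, hence $E\se B$, so any vertex of $(A\sm B)\cap\beta_n$ --- which exists as $(A,B)\restricted\beta_n$ is proper --- lies in $F\sm E$, while $K\cap\beta_n$ lies in $E\sm F$. Thus $(E,F)\restricted\beta_n$ is a proper separation, i.e.\ a member of $S(\Bcal)\restricted\beta_n$, of order $\abs{E\cap F\cap\beta_n}\le\abs{E\cap F}<\ell$ --- the desired contradiction.

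I expect the genuine difficulty to lie not in this lemma but in Lemma~\ref{almost_claim2}, on which it rests. Within the present argument the one point needing care is the last step: since the order used in $\min_{\mathrm{ord}}$ is measured after restriction to $\beta_n$, it is not enough to observe that $(A,B)$ and $(C,D)$ cross in $G$ with non-empty $\interior(B,D)$ --- one must locate a component of $\interior(B,D)$ that still meets $\beta_n$, which is exactly why the crossing is read off in $G[\beta_n]$ and the vanishing of $\ell_B,\ell_D$ is fed through Remark~\ref{corner_lemma}, and why Lemma~\ref{focusing_claim1} is needed to identify $\ell$ with $\abs{A\cap B}$.
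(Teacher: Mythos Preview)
Your argument is correct and follows essentially the same route as the paper: both hinge on Lemma~\ref{almost_claim2} to produce, inside the corner $\corner(B,D)$, a separation $(E,F)\in S(\Bcal)$ of strictly smaller order, and then use the minimality defining $N_{\beta_n}$ to force $(E,F)\restricted\beta_n$ to be improper (the paper) or, equivalently, to derive a contradiction from its being proper (your version). Your explicit appeal to Lemma~\ref{focusing_claim1} to identify $\ell$ with $\lvert A\cap B\rvert$---together with the observation that its proof only uses (F2) and (F3) at indices below $n$---is a point the paper leaves implicit but does need.
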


\begin{proof}
    We have to show that every $S(\Bcal)$-focusing sequence $(\beta_0, \ldots, \beta_n)$ is good, i.e.\ $N_{\beta_n}$ is nested with $S(\Bcal) \restricted \beta_n$.
    Let $(\beta_0, \ldots, \beta_n)$ be an $S(\Bcal)$-focusing sequence.
    Let ${(A,B)\restricted\beta_n \in N_{\beta_n}}$ and ${(C,D)\restricted\beta_n \in S(\Bcal)\restricted\beta_n}$.
    If $(A,B)$ and $(C,D)$ are nested, then so are $(A,B)\restricted\beta_n$ and $(C,D)\restricted\beta_n$.
    Suppose $(A,B)$ and $(C,D)$ are crossing.
    By Lemma~\ref{almost_claim2} $\ell_B$ and $\ell_D$ are empty.
    If $\interior(B,D) \cap \beta_n$ is empty, then by Remark~\ref{corner_lemma} $(A,B)\restricted\beta_n$ and $(C,D)\restricted\beta_n$ are nested.
    Hence by Lemma~\ref{almost_claim2} it suffices to show that $(E \sm F) \cap \beta_n$ is empty for every ${(E,F) \in S(\Bcal)}$ with $E \subseteq B \cap D$ whose order is strictly smaller than the order of $(A,B)$.
    
    Since $(A,B)\restricted\beta_n$ is proper, there is a ${v \in \beta_n \sm B \subseteq \beta_n \sm E \subseteq (F \sm E) \cap \beta_n}$.
    Since $(A,B)\restricted\beta_n$ has minimal order among all separations in $S(\Bcal)\restricted\beta_n$, we deduce that $(E,F)\restricted\beta_n$ is improper and hence either $(F \sm E) \cap \beta_n$ or $(E \sm F) \cap \beta_n$ is empty.
    Now $v$ witnesses that $(E \sm F) \cap \beta_n$ is empty, as desired.
\end{proof}

\begin{lemma}\label{N_is_refinable} 
    Given $r \in \mathbb{N}$, let $\Pcal$ be a set of $r$-robust distinguishable $k$-profiles for some values ${k \leq r+1}$. 
    Let $N$ be a nested separation system such that for every $(C,D)\in N$, there is some $\ell$-profile in $\Pcal$ induced by an $\ell$-block $b$ with $(C \cap D) \se b$. 
    Then any two distinct ${P, Q \in \Pcal}$ are distinguished efficiently by a separation nested with $N$. 
\end{lemma}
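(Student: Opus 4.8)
The plan is to prove the equivalent statement that for any two distinct $P,Q\in\Pcal$ some separation \emph{nested with $N$} distinguishes them efficiently; this is indeed equivalent, since any separation distinguishing two profiles of order at most $r+1$ lies in one of them and hence has order at most $r$, so it belongs to $S_{<r+1}^N$ as soon as it is nested with $N$. The argument is an uncrossing argument.

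Fix distinct $P,Q\in\Pcal$, which are distinguishable by hypothesis, and let $m$ be the minimum order of a separation distinguishing $P$ and $Q$. Among the finitely many separations $(A,B)$ with $(A,B)\in P$, $(B,A)\in Q$ and $\abs{A\cap B}=m$, choose one, again denoted $(A,B)$, that crosses as few separations of $N$ as possible. I claim it crosses none, which proves the lemma. Suppose not, and pick $(C,D)\in N$ crossing $(A,B)$. The point of minimality is the following reduction, which rests on Lemma~\ref{fish_lemma}: if some corner separation $(E,F)$ of the cross-diagram of $(A,B)$ and $(C,D)$ satisfies $(E,F)\in P$, $(F,E)\in Q$ and $\abs{E\cap F}=m$, we are done. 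Indeed, by Lemma~\ref{fish_lemma} such an $(E,F)$ is nested with $(C,D)$ and with every separation nested with both $(A,B)$ and $(C,D)$; since $N$ is nested, $(E,F)$ is therefore nested with $(C,D)$ and with every member of $N$ that is nested with $(A,B)$, so it crosses strictly fewer members of $N$ than $(A,B)$ does -- contradicting the choice of $(A,B)$. So everything reduces to producing such a corner separation $(E,F)$.

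Here the hypothesis on $N$ enters. Choose an $\ell$-block $b$ with $C\cap D\se b$ and $R\defeq P_\ell(b)\in\Pcal$. Since $\abs{C\cap D}<\ell$ and $b$ is $(<\ell)$-inseparable, $(C,D)$ does not separate $b$; reorienting $(C,D)$ if necessary we may assume $b\se D$, so that $(C,D)\in R$. Also $\abs{C\cap D}<\ell\le r+1$, so $(C,D)\in S_{<r+1}$ and the $r$-robustness of $P$, $Q$ and $R$ is available for it. I would now split into cases according to whether $m\ge\ell$ or $m<\ell$ and, in the latter case, according to which side of $(A,B)$ contains $b$, and in all cases according to how $P$, $Q$ and $R$ orient $(C,D)$. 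The combinatorial engine in each case is: Remark~\ref{submodularity} applied to the two diagonals of the cross-diagram locates in each diagonal a corner of order at most $\tfrac12\bigl(m+\abs{C\cap D}\bigr)$; refining this using $C\cap D\se b$ together with the known side of $(A,B)$ containing $b$ produces a corner separation of order at most $m$, which by efficiency of $(A,B)$ has order exactly $m$ as soon as it distinguishes $P$ and $Q$; and the orientation is controlled by (P) and consistency (for instance, if $(C,D)\in P$ then (P) forbids $(B\cap D,A\cup C)\in P$, so $(A\cup C,B\cap D)\in P$ provided its order is below $k$, while $(B\cap D,A\cup C)\le(B,A)\in Q$ is forced into $Q$ by consistency, so that corner separation distinguishes $P$ from $Q$). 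When a corner separation that we want to use has order $\ge k$ (so that the $k$-profile $P$ does not orient it directly), I would invoke the $r$-robustness of $P$: it either forces the corner separation into $P$ anyway, or forces the complementary corner separation to have order $<m$, which contradicts efficiency of $(A,B)$; likewise for $Q$ and its order $k'$. The auxiliary profile $R$ takes care of the configurations in which a relevant corner has order $\ge\max\{k,k'\}$ but still $<\ell$: such a separation is oriented by $R$, and comparing $R$ with $P$ and with $Q$ (using that $b\se D$ and $(C,D)\in R$) pins down its orientation.

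I expect the main obstacle to be exactly this last part of the case analysis: ensuring that in every configuration -- in particular when the natural corner separation has order too large for the profile axiom (P) of $P$ or of $Q$ to apply -- one can still extract a corner separation that simultaneously has order $m$, lies in $P$, and has its inverse in $Q$. This is where the hypothesis that the separator $C\cap D$ sits inside a block $b$ with $P_\ell(b)\in\Pcal$, together with $r$-robustness, is essential, and organising the cases so that each one yields either the desired efficient corner distinguisher or an outright contradiction with the minimality of $m$ (or with efficiency of $(A,B)$) is the technical heart of the proof.
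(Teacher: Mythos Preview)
Your setup is exactly right: choose an efficient distinguisher $(A,B)$ of $P$ and $Q$ that crosses as few members of $N$ as possible, and aim to replace it by a corner separation using Lemma~\ref{fish_lemma}. The easy case --- when $m=\abs{A\cap B}$ is at most the order of the block $b$ containing $C\cap D$ --- is indeed handled by a direct corner argument (one link is empty, and the two relevant corners have order $\leq m$).

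The gap is in the hard case, when $(A,B)$ actually separates $b$ (so both links $\ell_A,\ell_B$ are non-empty). Here the only corners of the cross-diagram of $(A,B)$ and $(C,D)$ that distinguish $P$ from $Q$ are the two ``$D$-side'' corners, and in the bad subcase both of these have order $>m$. Submodularity then only tells you the two ``$C$-side'' corners are small --- but those are oriented the same way by $P$ and by $Q$ and do not distinguish them. Your plan to rescue this with robustness of $P$ (or $Q$) does not work: robustness of a $k$-profile never produces an orientation of a separation of order $\geq k$, it only constrains which of the two ``$A\cup\cdot$'' corners can have small order, and here neither does. Likewise the auxiliary profile $R=P_\ell(b)$ orients the small $C$-side corners, but that still gives no efficient $P$--$Q$ distinguisher sitting among the corners of $(A,B)$ and $(C,D)$.

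The paper resolves this by an induction on $m$ that you are missing. From the two small $C$-side corners (both in $R$) together with $r$-robustness of $R$, one deduces that $(C,D)$ distinguishes $P$ from $R$; since $\abs{C\cap D}\leq \ell<m$, the induction hypothesis yields a separation $(E,F)$ \emph{nested with all of $N$} that distinguishes $P$ from $R$ efficiently, of order at most $\ell$. One then argues $(C,D)\leq(F,E)$ and runs the corner argument on the cross-diagram of $(A,B)$ with $(E,F)$: if either ``$E$-side'' corner had order $\leq m$ it would (via $(E,F)\geq(C,D)$ and Lemma~\ref{fish_lemma}) again contradict the choice of $(A,B)$; hence both have order $>m$, whence by submodularity both ``$F$-side'' corners have order $<\abs{E\cap F}\leq\ell$, and these separate the vertices in $\ell_A$ from those in $\ell_B$ --- contradicting that $b$ is $(\leq\ell)$-inseparable. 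The point is that the final contradiction is obtained from $(E,F)$, not from $(C,D)$, and $(E,F)$ is only available through the inductive hypothesis.
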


\begin{proof}
    Let $(A,B)$ distinguish $P, Q \in \Pcal$ efficiently such that the number of separations in $N$ nested with $(A,B)$ is maximal.
    Without loss of generality let $(A,B) \in P$.
    Let ${k \defeq \lvert A \cap B \rvert}$.
    We prove that $(A,B)$ is nested with $N$. 

    Suppose for a contradiction that there is some $(C,D) \in N$ not nested with $(A,B)$.
    Let $b$ be an $(\ell+1)$-block such that ${(C \cap D) \se b}$ whose induced profile $P_{\ell+1}(b)$ is in $\Pcal$.
    \paragraph{ \bf Case 1: $k\leq \ell$. }
    Remark~\ref{inseparable_separator} implies that $C \cap D$ is $(\leq \ell)$-inseparable and hence one of the links $\ell_A$ or $\ell_B$ is empty.
    Without loss of generality let $\ell_B$ be empty.
    The orders of the corner separations $(A \cup D, B \cap C)$ and $(A \cup C, B \cap D)$ are less or equal than $\abs{A \cap B}$. Hence they are oriented by $P$ and $Q$.
    Applying Lemma~\ref{profiles_are_havens} to ${X \defeq A \cap B}$ and $P$ yields a component $K$ of $G-X$ with ${(V(G)\sm K, K \cup N(K)) \in P}$.
    In particular we get ${K \subseteq B \sm A}$ by consistency.
    Since $\ell_B$ is empty and $K$ is connected, we obtain ${K \subseteq C \sm D}$ or ${K \subseteq D \sm C}$.
    Therefore either ${(A \cup D, B \cap C)}$ or ${(A \cup C, B \cap D)}$ is in $P$ by consistency to ${(V(G)\sm K, K \cup N(K))}$, and not in $Q$ by consistency to $(B,A)$.
    
    Hence there is a corner separation of $(A,B)$ and $(C,D)$ distinguishing $P$ and $Q$ efficiently.
    By Lemma~\ref{fish_lemma} it is nested with every separation in $N$ that is also nested with $(A,B)$, as well as with $(C,D)$.
    Hence it crosses strictly less separations of $N$ than $(A,B)$, contradicting the choice of $(A,B)$.
    Thus $(A,B)$ is nested with $N$.

    \paragraph{ \bf Case 2: $k \geq \ell$. }
    We prove this case by induction on $k$ with Case~1 as the base case.
    By the efficiency of $(A,B)$, the separation $(C,D)$ does not distinguish $P$ and $Q$.
    Thus we may assume that $(C,D)$ is in both $P$ and $Q$. 
    If one of the corner separations $(A \cap D, B \cup C)$ or $(B \cap D, A \cup C)$ had order at most $k$, then it would violate the maximality of $(A,B)$ by Lemma~\ref{fish_lemma}.
    Indeed, it would be nested with every separation in $N$ that is also nested with $(A,B)$, as well as with $(C,D)$.
    
    Hence we may assume that both these corner separations have order larger than $k$ and therefore both links $\ell_A$ and $\ell_B$ are not empty. By Remark~\ref{submodularity}, the opposite corner separations $(A \cap C, B \cup D)$ and $(B \cap C, A \cup D)$
    have order strictly less than $\lvert C \cap D \rvert$ and are in $P_{\ell+1}(b)$ since $C \cap D \se b$.
    As $b$ is $r$-robust, $(C,D) \in P_{\ell+1}(b)$. 
    Hence $(C,D)$ distinguishes $P$ and $P_{\ell+1}(b)$.
    
    By the induction hypothesis, there is a separation $(E,F)$ of order at most~$\ell$ distinguishing $P$ and $P_{\ell+1}(b)$ efficiently that is nested with $N$. 
    We may assume that ${(E,F) \in P_{\ell+1}(b)}$ and  ${(F,E) \in P}$.
    Furthermore, $(E,F)$ does not distinguish $P$ and $Q$, since $\lvert E \cap F \rvert < \lvert A \cap B \rvert$.
    We claim that ${(C,D) \leq (F,E)}$. 
    Indeed, since $(C,D)$ and $(F,E)$ are nested and $P$ contains both of them, either ${(C,D) \leq (F,E)}$ or ${(F,E) \leq (C,D)}$.
    By consistency of $P_{\ell+1}(b)$, we can conclude that ${(C,D) \leq (F,E)}$.
    
    If the order of $(E \cap B, F \cup A)$ is at most $k$, then it would distinguish $P$ and~$Q$ efficiently.
    It would violate the maximality of $(A,B)$ by Lemma~\ref{fish_lemma} since it is nested with every separation in $N$ that is also nested with $(A,B)$, as well as with $(C,D)$ itself as $(C,D) \geq (E,F) \geq (E \cap B, F \cup A)$.
    Thus we may assume that  $(E \cap B, F \cup A)$ has order larger than $k$.
    Similarly we may assume that $(E \cap A, F \cup B)$ has order larger than $k$.

    Again by Remark~\ref{submodularity}, the opposite corner separations ${(F\cap A, E\cup B)}$ and ${(F\cap B, E\cup A)}$ have order less than $\lvert E\cap F \rvert$.
    But by construction they separate $\ell_A$ and $\ell_B$ and hence $b$, contradicting the fact that $b$ is $(\leq \ell)$-inseparable.
\end{proof}

\begin{theorem}\label{main_thm} 
    Let $G$ be a finite graph,  $r\in \mathbb{N}$ and let $\Pcal$ be a canonical set of $r$-robust distinguishable $\ell$-profiles for some values $\ell \leq r+1$. 

    Then $G$ has a canonical tree-decomposition $\Tcal$ that distinguishes efficiently every two distinct profiles in $\Pcal$, 
    and which has the further property that
    every separable block whose induced profile is in~$\Pcal$ is equal to the unique part of~$\Tcal$ in which it is contained.
\end{theorem}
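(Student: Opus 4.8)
The plan is to build $\Tcal$ in two stages: first a canonical tree-decomposition already displaying all the separable blocks we care about, then a canonical refinement inside its torsos that distinguishes the profiles in $\Pcal$ efficiently without destroying those blocks. For the first stage, let $\Bcal$ be the set of all separable blocks $b$ of $G$ whose induced profile lies in $\Pcal$, and write $k=k(b)$ for the order of $b$, so $k\le r+1$. Since separability of a block and its induced profile are invariant under $\Aut(G)$ and $\Pcal$ is canonical, $\Bcal$ is canonical, hence so is the set of profiles it induces and therefore $S(\Bcal)$. By Lemma~\ref{S_almost_nested} this set is almost nested, so Theorem~\ref{tree_from_almost_nested} gives a canonical tree-decomposition $\Tcal(S(\Bcal))$ whose parts are $S(\Bcal)$-blocks or hubs, with every $S(\Bcal)$-block a part and every induced separation having the same separator as some separation of $S(\Bcal)$. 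Write $N_0\defeq N(\Tcal(S(\Bcal)))$. By Lemma~\ref{S_blocks} every $b\in\Bcal$ is an $S(\Bcal)$-block, hence a part of $\Tcal(S(\Bcal))$; since for any $v\notin b$ the edge of the decomposition tree at the node of $b$ on the path towards a part containing $v$ induces a separation of $N_0$ with $b$ on one side and $v$ on the other, $b$ is in fact an $N_0$-block.

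For the second stage, first apply Lemma~\ref{N_is_refinable} with $N=N_0$: by Remark~\ref{inseparable_separator} and Theorem~\ref{tree_from_almost_nested}(iii) every separator of a separation in $N_0$ is contained in a block of $\Bcal$, whose induced profile lies in $\Pcal$, so its hypothesis is met, and we conclude that $S_{<r+1}^{N_0}$ distinguishes every two distinct profiles in $\Pcal$ efficiently. Hence Construction~\ref{constr:refine_N} applies to $N_0$ and $\Pcal$, and Theorem~\ref{refine_N} yields a canonical nested system $\overline N\supseteq N_0$ that distinguishes every two distinct profiles in $\Pcal$ efficiently; let $\Tcal\defeq\Tcal(\overline N)$ be the canonical tree-decomposition of $G$ it induces, via Theorem~\ref{tree_from_N}. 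Then $\Tcal$ distinguishes every two distinct profiles in $\Pcal$ efficiently, and it remains to show that each $b\in\Bcal$ equals the unique part of $\Tcal$ containing it.

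Fix $b\in\Bcal$; let $t_b$ be the node of $\Tcal(N_0)$ with $P_{t_b}=b$ and $H_{t_b}$ its torso, so $V(H_{t_b})=b$ and the edges of $H_{t_b}$ are those of $G[b]$ together with the adhesion sets at $t_b$ made complete. Note the adhesion sets at $t_b$ have order $<k$ (one reads this off the last refinement step producing $b$ in the construction of $\Tcal(S(\Bcal))$, using that $b$ is separable). The heart of the proof is that the tree-decomposition $\Tcal^{t_b}$ of $H_{t_b}$ used in Construction~\ref{constr:refine_N} is trivial, which I would deduce from two facts. \emph{(1)} $H_{t_b}$ has no proper separation of order $<k$: a proper separation $(A',B')$ of $H_{t_b}$ of order $<k$ splits no adhesion set at $t_b$, hence no set $N(C)$ for a component $C$ of $G-b$ (each such $N(C)$ lying in the adhesion set of the edge at $t_b$ towards $C$), so assigning each component $C$ of $G-b$ to whichever side of $(A',B')$ contains $N(C)$ extends $(A',B')$ to a separation of $G$ of order $<k$ separating $b$, contradicting $(<k)$-inseparability of $b$. \emph{(2)} For every $Q\in\Pcal$ the profile $\widetilde{Q}^{t_b}$ has order $\le k$: if $Q$ inhabits $b$ and has order $\ge k$, then $b\se B$ for every $(A,B)\in P_k(b)$ forces $P_k(b)\se Q$, so (since $P_k(b)\in\Pcal$ and $\Pcal$ is distinguishable) $Q=P_k(b)$, of order $k$; if $Q$ does not inhabit $b$ and has order $>k$, then Lemma~\ref{profiles_are_havens} applied to a separator witnessing this, together with $|N(C)|<k$ for components $C$ of $G-b$ (Lemma~\ref{separability}), yields a component $C$ with $(V(G)\sm C,C\cup N(C))\in Q$, whence $\widetilde{Q}^{t_b}$ is defined by Case~3 of Construction~\ref{constr:profile_on_torso} and has order $|N(C)|<k$; in the remaining cases $Q$ has order $\le k$ and so does $\widetilde{Q}^{t_b}$. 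By (1), every profile of $H_{t_b}$ of order $\le k$ orients only improper separations and is therefore the unique such profile, so no two of them are distinguishable; hence Theorem~\ref{canon_td_profiles} applied to $H_{t_b}$ with the bound $\max\{\text{order of }\widetilde{Q}^{t_b}:Q\in\Pcal\}\le k$ returns a $\Tcal^{t_b}$ with no proper induced separation. Now by Lemma~\ref{gluing_lemma}(iii) every separation induced by the glued decomposition either lies in $N_0$, and then does not separate $b$ since $b$ is an $N_0$-block, or is induced on a torso $H_t$ by $\Tcal^t$; being nested with $N_0$ it splits no adhesion set at $t_b$, so its restriction to $b$ is a separation of $H_{t_b}$ of order at most that of the adhesion set at $t_b$ on the path to $t$, namely $<k$, which by (1) must be improper (for $t=t_b$ this uses that $\Tcal^{t_b}$ has no proper induced separation) — so again it does not separate $b$. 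Thus $b$ is $\overline N$-inseparable, and being a maximal $N_0$-inseparable set it is an $\overline N$-block, hence a part of $\Tcal$. Since moreover no separator of a separation in $\overline N$ contains $b$ — those from $N_0$ because such a separator lies in a block of $\Bcal$ while $b$ is a maximal $(<k)$-inseparable set, those from the torso refinements because their separators are proper subsets of parts — the set $b$ lies in no other part of $\Tcal$, and the claim follows.

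The main obstacle, and the step where distinguishability of $\Pcal$ is genuinely used, is fact (2): one must rule out a profile in $\Pcal$ of order exceeding $k$ that ``lives inside'' the separable block $b$ and would force the refinement to split $b$. The resolution is that any such profile contains, and hence equals, the profile $P_k(b)$ — contradicting distinctness within $\Pcal$ — and this, together with fact (1) and the bound on adhesion sets at $t_b$, is what makes the two stages compatible.
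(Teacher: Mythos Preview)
Your proof is correct and follows essentially the same approach as the paper: build $\Tcal(S(\Bcal))$ via Lemma~\ref{S_almost_nested} and Theorem~\ref{tree_from_almost_nested}, verify the hypothesis of Construction~\ref{constr:refine_N} via Lemma~\ref{N_is_refinable}, and then argue that the refinement adds no separation splitting a separable $b\in\Bcal$ because the induced profiles on the torso $H_{t_b}$ all have order at most $k$ while $H_{t_b}$ has no proper $(<k)$-separation. Your treatment of the final step is in fact more explicit than the paper's (you spell out facts~(1) and~(2) and handle the case where the new separation comes from a torso $H_t$ with $t\neq t_b$), and your restriction of $\Bcal$ to separable blocks is a harmless variation.
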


\begin{proof} 
    Let $\Bcal$ be the set of blocks whose induced profiles are in $\Pcal$.
    We consider $S(\Bcal)$ as above.
    Lemma~\ref{S_almost_nested} and Construction~\ref{main_construction} yield a canonical tree-decomposition $\Tcal(S(\Bcal))$ where by Lemma~\ref{S_blocks} and Theorem~\ref{tree_from_almost_nested}\,(i) every separable $b \in \Bcal$ is equal to the unique part in which it is contained.
    
    Let $N$ be the nested separation system induced by $\Tcal(S(\Bcal)))$.
    With Lemma~\ref{N_is_refinable} we can apply Construction~\ref{constr:refine_N} to obtain $\overline{N}$, which by Theorem~\ref{refine_N}\,(ii) distinguishes the profiles in $\Pcal$ efficiently.
    
    It is left to show that no separation $(A,B) \in \overline{N} \sm N$ separates a separable $k$-block $b \in \Bcal$.
    Suppose for a contradiction that $(A,B) \in \overline{N} \sm N$ separates $b$.
    Let $P_t$ be the part of $\Tcal(S(\Bcal))$ with $P_t = b$.
    Note that since the adhesion sets $P_t \cap P_u$ for any edge $tu$ have size strictly smaller than $k$ and since the only profile in $\Pcal$ inhabiting $P_t$ is $P_k(b)$, no profile in $\Pcal$ induces an $\ell$-profile for some $\ell \geq k+1$ on the torso $H_t$.
    Then by Construction~\ref{constr:refine_N} and Lemma~\ref{gluing_lemma}\,(iii) the induced separation $(A \cap P_t, B \cap P_t)$ is a proper separation of $H_t$ distinguishing two $(\leq k)$-profiles of $H_t$ efficiently.
    But since $H_t$ has no proper $(< k)$-separation, it has no two distinguishable $(\leq k)$-profiles.
    
    Hence Theorem~\ref{tree_from_N} yields a tree-decomposition $\Tcal(\overline{N})$ with the desired properties.
\end{proof}

\begin{corollary}\label{cor_main}
    Every finite graph $G$ has a canonical tree-decomposition $\Tcal$ that distinguishes efficiently every two distinct maximal robust profiles, 
    and which has the further property that
    every separable block inducing a maximal robust profile is equal to the unique part of $\Tcal$ in which it is contained.
\end{corollary}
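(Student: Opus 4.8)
The plan is to derive Corollary~\ref{cor_main} from Theorem~\ref{main_thm} by choosing the right set $\Pcal$ of profiles. The natural candidate is the set $\Pcal$ of all maximal robust profiles of $G$. First I would check that this set satisfies the hypotheses of Theorem~\ref{main_thm}: it is canonical because the automorphisms of $G$ permute maximal robust profiles among themselves (maximality and robustness are invariant under $\Aut(G)$); and it is a set of distinguishable profiles because two distinct maximal robust profiles cannot be $\le$-comparable (one containing the other would contradict maximality via the $\ell>k$ clause in the definition, and the argument for equal-order profiles uses property (P)). The one mismatch is bookkeeping: Theorem~\ref{main_thm} is phrased for a single bound $r$ with all profiles $r$-robust of order $\ell\le r+1$, whereas maximal robust profiles can have arbitrarily large order. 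So the first real step is to observe that for a \emph{finite} graph $G$ there is an $r$ large enough that every maximal robust profile has order at most $r+1$ — indeed every profile has order at most $|V(G)|$ — and that by Remark~\ref{robust}\,(i) every such profile, being robust, is in particular $r$-robust. Hence $\Pcal$ meets all hypotheses of Theorem~\ref{main_thm} with this $r$.

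Applying Theorem~\ref{main_thm} to this $\Pcal$ then yields a canonical tree-decomposition $\Tcal$ of $G$ that distinguishes efficiently every two distinct maximal robust profiles, and in which every separable block whose induced profile lies in $\Pcal$ is equal to the unique part containing it. The only remaining gap between this conclusion and the statement of the corollary is the phrase ``every separable block inducing a maximal robust profile'': this is exactly a block $b$ whose induced profile $P_k(b)$ is a maximal robust profile, i.e.\ lies in $\Pcal$, so the two formulations coincide verbatim. Thus the corollary follows immediately.

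The step I expect to require the most care is the verification that $\Pcal$ (the maximal robust profiles) is genuinely a canonical, distinguishable set — in particular that distinct maximal robust profiles are always distinguishable. One has to argue that if $P$ and $Q$ are maximal robust profiles with, say, no separation oriented oppositely by them, then comparability of the two orientations forces one to contain the other, and by considering whether their orders coincide one reaches a contradiction with maximality (differing orders) or with (P) together with consistency (equal orders). This is the same style of argument used implicitly around Lemma~\ref{profiles_are_havens}, and is routine but needs to be stated. Everything else — canonicity, the choice of $r$, and the translation of the ``separable block'' clause — is bookkeeping. I would therefore write the proof as: \emph{Let $\Pcal$ be the set of maximal robust profiles of $G$; pick $r=|V(G)|$; note $\Pcal$ is canonical and distinguishable and consists of $r$-robust $\ell$-profiles with $\ell\le r+1$; apply Theorem~\ref{main_thm}; observe the resulting $\Tcal$ is as required.}
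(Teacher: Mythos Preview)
Your proposal is correct and follows exactly the same route as the paper: apply Theorem~\ref{main_thm} to the set $\Pcal$ of maximal robust profiles. The paper's own proof is a single line (``Since the set of maximal robust profiles is by definition distinguishable, we can apply Theorem~\ref{main_thm}''), so the extra care you take over canonicity, the existence of a suitable $r$, and the distinguishability argument is more detail than the authors deemed necessary, but none of it is wrong or a detour.
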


\begin{proof}
    Since the set of maximal robust profiles is by definition distinguishable, we can apply Theorem~\ref{main_thm}.
\end{proof}

\begin{corollary}\label{cor_main2}
    Every finite graph $G$ has a canonical tree-decomposition~$\Tcal$ of adhesion less than $k$ that distinguishes efficiently every two distinct $k$-profiles, 
    and which has the further property that
    every separable $k$-block is equal to the unique part of $\Tcal$ in which it is contained.
\end{corollary}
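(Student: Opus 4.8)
The plan is to deduce Corollary~\ref{cor_main2} from Theorem~\ref{main_thm} by choosing the parameters appropriately. The natural choice is to take $r \defeq k-1$ and let $\Pcal$ be the set of all $(k-1)$-robust $\ell$-profiles of $G$ for the values $\ell \leq k$; by Remark~\ref{robust}\,(i) every $\ell$-profile with $\ell \leq k$ is automatically $(k-1)$-robust, so $\Pcal$ is simply the set of \emph{all} $\ell$-profiles for $\ell \leq k$. This set is canonical, since the automorphisms of $G$ permute the separations of each order and hence permute the profiles. Applying Theorem~\ref{main_thm} to this $\Pcal$ (after checking it is distinguishable --- see the obstacle below) yields a canonical tree-decomposition $\Tcal$ distinguishing every two of these profiles efficiently, in which every separable block whose induced profile lies in $\Pcal$ equals the unique part containing it.

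Next I would translate the conclusion into the language of the corollary. Every $k$-block $b$ induces the $k$-profile $P_k(b) \in \Pcal$, so the ``separable block'' clause of Theorem~\ref{main_thm} immediately gives that every separable $k$-block is equal to the unique part of $\Tcal$ in which it is contained. It remains to see that $\Tcal$ has adhesion less than $k$: every separation induced by $\Tcal$ distinguishes two profiles in $\Pcal$ efficiently, and since any two $\ell$-profiles with $\ell \leq k$ can already be distinguished by a separation of order less than $k$ (take any separation witnessing distinguishability; its order is at most $\max \ell - 1 \leq k-1$, because a separation in an $\ell$-profile has order less than $\ell$), the efficient distinguisher has order less than $k$. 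Hence every adhesion set of $\Tcal$ has size less than $k$, and distinguishing efficiently every two $\ell$-profiles with $\ell \leq k$ in particular distinguishes efficiently every two distinct $k$-profiles.

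The main obstacle is that the set $\Pcal$ of \emph{all} $\ell$-profiles with $\ell \leq k$ need not be distinguishable: two distinct profiles of different orders can be nested (one contained in the other), so no separation orients them oppositely. Theorem~\ref{main_thm} requires $\Pcal$ to be distinguishable. To get around this, I would instead let $\Pcal$ be a canonical distinguishable set of profiles that still contains $P_k(b)$ for every separable $k$-block $b$ --- for instance, one could take the set of all $\ell$-profiles with $\ell \leq k$ that are \emph{$\le$-maximal} among profiles of order at most $k$, which is canonical and distinguishable, and which contains every $P_k(b)$ for a separable $k$-block $b$ (if $P_k(b)$ were strictly contained in some $\ell$-profile $Q$ with $\ell \leq k$, then $Q$ would have to agree with $P_k(b)$ on all separations of order less than $k$ and orient some $(< \ell)$-separation of order $\geq k$... this needs a short argument, but a separable $k$-block has all its $S_k(b)$-separations of order less than $k$ by Lemma~\ref{separability}, and one checks $P_k(b)$ is not strictly below any such $Q$). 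Alternatively, and more cleanly, one observes that the statement and proof of Theorem~\ref{main_thm} go through verbatim if ``distinguishable'' is weakened to the requirement that $\Pcal$ merely be closed under the relevant operations; but since the cleanest route is to invoke Theorem~\ref{main_thm} as a black box, I would isolate the needed distinguishable canonical family as above. Once the right $\Pcal$ is fixed, the rest of the deduction is routine bookkeeping, combining Theorem~\ref{main_thm} with the order bounds on efficient distinguishers of low-order profiles.
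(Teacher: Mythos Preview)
Your main difficulty is self-inflicted. The paper simply takes $\Pcal$ to be the set of \emph{all $k$-profiles} (not all $\ell$-profiles for $\ell\le k$) and $r\defeq k-1$. By Remark~\ref{robust}\,(i) every $k$-profile is $(k-1)$-robust, the set is clearly canonical, and it is distinguishable for the trivial reason that two distinct orientations of the same set $S_{<k}$ must disagree on at least one separation. Theorem~\ref{main_thm} then applies directly. Your detour via all $\ell$-profiles with $\ell\le k$ is what creates the distinguishability obstacle; the workaround via $\leq$-maximal profiles, while salvageable, is unnecessary once you realise every $k$-profile is already maximal among profiles of order at most~$k$, so the maximal set contains all $k$-profiles anyway.

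There is also a gap in your adhesion argument. You claim that ``every separation induced by $\Tcal$ distinguishes two profiles in $\Pcal$ efficiently'', but this is not part of the statement of Theorem~\ref{main_thm}, so you cannot invoke it when treating the theorem as a black box. The adhesion bound instead comes out of the construction: with $r=k-1$, the nested system $\overline N$ produced in the proof of Theorem~\ref{main_thm} (via Construction~\ref{constr:refine_N}) consists of separations of order at most $r=k-1$, and the separations in $N$ inherit separators from $S(\Bcal)\subseteq S_{<k}$ by Theorem~\ref{tree_from_almost_nested}\,(iii). The paper's two-line proof relies on this implicitly; if you want to argue adhesion from the outside, you would need to strengthen the statement of Theorem~\ref{main_thm} to record the bound on the order of the induced separations.
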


\begin{proof}
    By Remark~\ref{robust}\,(i) any $k$-profile is $(k-1)$-robust. Since the set of all $k$-profiles is by definition distinguishable, we can apply Theorem~\ref{main_thm}.
\end{proof}

Theorem~\ref{main_thm} fails if we do not require that $\Pcal$ is distinguishable:
\begin{example}\label{ex:distinguishable_necessary}
    Consider the graph obtained by two cliques $K_1$ and $K_2$ of size at least $k + 1 \geq 7$ sharing $k-1$ vertices, together with a vertex $v$ joined to two vertices of $K_1 - K_2$ and to two vertices of $K_2 - K_1$, see Figure~\ref{fig:ex4}.
    
    Then $K_1 \cup K_2$ is a separable $5$-block, as witnessed by the separation ${(\{v\} \cup N(v), K_1 \cup K_2)}$.
    But the two $(k+1)$-blocks $K_1$ and $K_2$ are only distinguished efficiently by ${(K_1 \cup \{v\}, K_2 \cup \{v\})}$.
    Since this separation crosses any separation separating $v$ from $K_1 \cup K_2$, there is no tree-decomposition that distinguishes $K_1$ and $K_2$ efficiently such that there is a part equal to $K_1 \cup K_2$.
    Moreover, even the union of the parts inhabited by $P_5(K_1 \cup K_2)$ in any tree-decomposition that distinguishes $K_1$ and $K_2$ efficiently contains with $v$ a vertex outside the block.
\end{example}

\begin{center}
    \begin{figure}[htpb]
        \begin{center}
            \includegraphics[height=3cm]{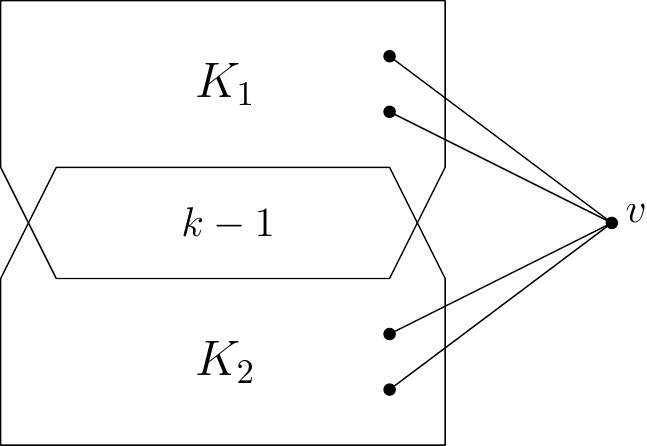}
            \caption{The graph of Example~\ref{ex:distinguishable_necessary}}
            \label{fig:ex4}
        \end{center}
    \end{figure}
\end{center}

\section*{Acknowledgement}\label{section:acknowledge}

We thank Matthias Hamann for proof reading.

\bibliographystyle{plain}
\bibliography{literature.bib}

\end{document}